\newcommand{\nc}{\newcommand}
 \nc{\cl}{\centerline}
 \nc{\SL}{{\rm SL}}
 \nc{\hatQ}{{\hat Q}}
 \nc{\sgn}{{\rm sgn}}
 \newcommand{\barS}{{\bar S}}
  \newcommand{\len}{{\rm len}}
  \newcommand{\resp}{{resp.\,}}
\nc\diag{{\rm diag}}
\renewcommand{\vert}{{\,|\,}}
\nc{\hatL}{{\hat L}}
\nc{\barE}{{\bar   E}}
\nc{\D}{{\mathcal D}}
\nc{\E}{{\mathcal E}}
\nc{\F}{{\mathcal F}}
\nc{\even}{{\rm e}}
\nc{\ep}{\epsilon}
\nc{\odd}{{\rm o}}
\nc{\Coker}{{\rm Coker}}
\nc{\olE}{{\overline E}}
\nc{\indBG}{{\rm ind}_B^G\,}
\nc{\indHG}{{\rm ind}_H^G\,}
\nc{\que}{{\mathbb Q}}
\nc{\barlambda}{{\bar\lambda}}
\nc{\barmu}{{\bar\mu}}
\nc{\barm}{{\bar m}}
\nc{\divind}{{\rm div.ind}}
\nc{\tl}{{\tilde{\lambda}}}
\nc{\Sym}{{\rm \Sigma}}
\nc{\Symm}{{\rm Sym}}
\nc{\hatG}{{\hat G}}
\newcommand{\q}{\quad}
\newcommand{\nat}{{\mathbb N}}
\newcommand{\Sp}{{\rm Sp}}
\newcommand{\bs}{\bigskip}
\renewcommand{\vert}{\,|\,}
\renewcommand{\sgn}{{\rm sgn}}
\renewcommand{\vert}{\,|\,}
 \newcommand{\tbw}{\textstyle\bigwedge}
\newcommand{\zed}{{\mathbb Z}}
\newcommand{\GL}{{\rm GL}}
\nc{\geom}{{\rm geom}}
\nc{\rep}{{\rm rep}}
\newcommand{\core}{{\rm core}}
\newtheorem{definition}{Definition}[section]
\newtheorem{proposition}[definition]{Proposition}
\newtheorem{definitions}[definition]{Definitions}
\newtheorem{theorem}[definition]{Theorem}
\newtheorem{lemma}[definition]{Lemma}
\newtheorem{corollary}[definition]{Corollary}
\newtheorem{remark}[definition]{Remark}
\begin{document}


\centerline{\bf Composition Factors of  Tensor Products }
\centerline{\bf of   Symmetric Powers}

\bigskip

\centerline{Stephen Donkin  and Haralampos Geranios}

\bigskip

{\it Department of Mathematics, University of York, York YO10 5DD}\\

\medskip

{\tt stephen.donkin@york.ac.uk,  haralampos.geranios@york.ac.uk}

\bs

\centerline{20  January    2015}
\bs\bs\bs

\section*{Abstract}

\q We determine the composition factors of the  tensor product $S(E)\otimes S(E)$ of two copies of the symmetric algebra of the  natural module $E$ of a general linear group over an algebraically closed field of positive characteristic.  Our  main result  may be regarded as a substantial generalisation of the tensor product theorem of Krop, \cite{Krop}, and Sullivan, \cite{Sullivan} on composition factors of $S(E)$.   We earlier answered the question of which polynomially injective modules are infinitesimally injective in terms of the \lq\lq divisibility index". We are now able to give   an explicit description of the divisibility index for polynomial modules for general linear groups of degree at most $3$.


\bs\bs\bs

\section*{Introduction}

\bs

\q This paper is a continuation of \cite{DG4}.   We are interested in the set of composition factors of the $m$-fold tensor product $S(E)^{\otimes m}$,  of the symmetric algebra $S(E)$ of the natural module $E$ for the general linear group ${\rm GL}_n(K)$ of degree $n$, over an algebraically closed field $K$ of characteristic $p>0$. In \cite{DG4} we related this to the set of composition factors of $\barS(E)^{\otimes m}$, where $\barS(E)$ is the truncated symmetric algebra on $E$. The main result, Theorem 6.5, of \cite{DG4}, is an explicit description of the set of composition factors of $\barS(E)^{\otimes m}$. Here we use this, in the case $m=2$, to give an explicit description of the composition factors of $S(E)\otimes S(E)$.  The description of a composition factor is as a twisted tensor product of \lq\lq primitive" modules and may be regarded as a generalisation of the tensor product theorem of Krop, \cite{Krop}, and Sullivan, \cite{Sullivan}, on the composition factors of $S(E)$.

\q The layout of the paper is the following. In Section 1 we record, in the general context, some properties of bounded, special and good partitions. Section 2 is the technical heart of the paper. In this we determine the $(2,1)$-special partitions. The importance for us is to have control over the composition factors of $L\otimes \tbw^jE$, $j\geq 0$, where $L$ is a composition factor of $\barS(E)\otimes \barS(E)$. The result, Theorem 2.10, is that a partition is $(2,1)$-special if and only if it has the form $\mu+\omega_s$,  for some $s\geq 0$, where    $\mu$ is $2$-special and $\omega_s=1^s$ for $s\geq 1$ and  $\omega_0=0$.  This is key  to  the tensor product description of a composition factor of $S(E)\otimes S(E)$ that we obtain in Section 3. The arguments of Section 2 are highly inductive and somewhat lengthy, involving repeated application of node removal from Young diagrams.   In Section 3 we also give an explicit description, by highest weight, of the polynomial  injective modules for ${\rm GL}_3(K)$ which are injective on restriction to the first infinitesimal subgroup. This is obtained by combining a criterion from \cite{DG2} with our description of the $2$-good partitions, in the special case $n=3$.

\bs

\bs\bs\bs



\section{Generalities on bounded, special and good partitions.}

\q We use the notation and terminology of \cite{DG4}.   Thus  $K$ denotes an algebraically closed field of characteristic $p>0$. We write 
$E$ for the natural module for the general linear group $\GL_n(K)$.   We write $S(E)$ for the symmetric algebra on $E$ and $\barS(E)$ for $S(E)/I$, where $I$ is the ideal generated by $x^p$, $x\in E$. Then $\GL_n(K)$ acts naturally on $S(E)$ and $\barS(E)$ as algebra automorphisms. 
We write $\Lambda^+(n)$ for the set of all partitions of length at most $n$ and for $\lambda\in \Lambda^+(n)$ write $L(\lambda)$ for the irreducible polynomial $\GL_n(K)$-module with highest weight $\lambda$.  For $1\leq m\leq n$ we say that $\lambda\in \Lambda^+(n)$ is $m$-good (\resp $m$-special) if $L(\lambda)$ is a composition factor of $S(E)^{\otimes m}$ (\resp $\barS(E)^{\otimes m}$). 

\q We use the notation $E_n$ and $L_n(\lambda)$ for $E$ and $L(\lambda)$  when we wish to emphasise the role of $n$.

\q We shall need some additional  terminology.

\medskip

\begin{definition} Let $a,b\geq 0$. We shall say that a partition $\lambda$ is $(a,b)$-bounded  if $\lambda_{a+1}\leq b$. 
\end{definition}

Thus a partition $\lambda$ is $(a,b)$-bounded if its diagram fits inside the diagram of a partition of the form $r^ab^s$, for some $r,s\geq 1$.
For  example, $(1,1)$-bounded partitions are hook partitions and   $(2,0)$-bounded  partitions are those with at most two rows. Note that in general a partition $\lambda$ is $(a,b)$-bounded  if and only if the transpose $\lambda'$ is $(b,a)$-bounded.

\begin{definition} We shall say that $\lambda\in \Lambda^+(n)$ is $(a,b)$-special (\resp good), with respect to $n$,    if $L(\lambda)$ is a composition factor of $\barS(E_n)^{\otimes a}\otimes \tbw(E_n)^{\otimes b}$ (\resp $S(E_n)^{\otimes a}\otimes \tbw(E_n)^{\otimes b}$).  Thus $\lambda$ is $m$-special (\resp good) if 
 it is $(m,0)$-special (\resp good), with respect to $n$.
\end{definition}

\begin{remark} Note for any weight $\lambda$ of   $\barS(E_n)^{\otimes a}\otimes \tbw(E_n)^{\otimes b}$ we have $\lambda_1\leq a(p-1)+b$.

\end{remark}

\begin{proposition} Let $\lambda\in\Lambda^+(n)$ be $(a,b)$-good (with respect to $n$). Then $L(\lambda)$ is a composition factor of $L(\mu)\otimes L(\tau)^F$, for some $(a,b)$-special partition $\mu\in \Lambda^+(n)$ and some $a$-good partition $\tau\in \Lambda^+(n)$. In particular   a restricted partition in $\Lambda^+(n)$ is $(a,b)$-good if and only if it is $(a,b)$-special.

\end{proposition}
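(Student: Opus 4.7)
The plan is to equip $S(E_n)^{\otimes a}\otimes \tbw(E_n)^{\otimes b}$ with a $G$-module filtration whose associated graded is $M\otimes N^F$, where $M=\barS(E_n)^{\otimes a}\otimes \tbw(E_n)^{\otimes b}$ and $N=S(E_n)^{\otimes a}$, and then extract composition factors via Steinberg's tensor product theorem. The filtration I have in mind is the one obtained from the $I$-adic filtration of $S(E)$, where $I\subset S(E)$ is the $G$-stable ideal generated by $\{e^p : e\in E\}$; the standard fact here (recorded already in \cite{DG4}) is that $\operatorname{gr}_I S(E)\cong \barS(E)\otimes S(E)^F$ as $G$-modules. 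Taking $a$-fold tensor powers of this filtration and tensoring with $\tbw(E_n)^{\otimes b}$ produces the desired filtration; since composition factors pass to associated gradeds, any composition factor of $S(E_n)^{\otimes a}\otimes \tbw(E_n)^{\otimes b}$ is also a composition factor of $M\otimes N^F$.

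Refining by composition series of $M$ and $N$, the module $M\otimes N^F$ admits a filtration whose simple sections are of the form $L(\mu)\otimes L(\tau)^F$, with $L(\mu)$ a composition factor of $M$ (so $\mu$ is $(a,b)$-special) and $L(\tau)$ a composition factor of $N$ (so $\tau$ is $a$-good). Hence $L(\lambda)$ occurs as a composition factor of some such $L(\mu)\otimes L(\tau)^F$, which proves the first assertion. The easy direction of the ``in particular'' statement is automatic, because $M$ is a $G$-module quotient of $S(E_n)^{\otimes a}\otimes \tbw(E_n)^{\otimes b}$, so every $(a,b)$-special partition is $(a,b)$-good.

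For the converse, suppose $\lambda$ is restricted and $(a,b)$-good, and pick $(\mu,\tau)$ as above. Write the Steinberg decomposition $\mu=\mu_0+p\mu_1$ with $\mu_0$ restricted. Steinberg's tensor product theorem gives
\[
L(\mu)\otimes L(\tau)^F\cong L(\mu_0)\otimes\bigl(L(\mu_1)\otimes L(\tau)\bigr)^F,
\]
whose composition factors are precisely the modules $L(\mu_0+p\pi)$ with $L(\pi)$ ranging over composition factors of $L(\mu_1)\otimes L(\tau)$. The restrictedness of $\lambda$ forces $\lambda=\mu_0$ and $\pi=0$, so $L(0)$ must be a composition factor of $L(\mu_1)\otimes L(\tau)$. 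The decisive observation is then that $L(\mu_1)$ and $L(\tau)$ are polynomial modules of polynomial degrees $|\mu_1|$ and $|\tau|$ respectively, so every composition factor of their tensor product has polynomial degree $|\mu_1|+|\tau|$; the appearance of $L(0)$ forces $\mu_1=\tau=0$, whence $\mu=\lambda$, and $\lambda$ is $(a,b)$-special.

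The step I expect to be the main obstacle is verifying the $G$-equivariance (not merely $T$- or $B$-equivariance) of the associated-graded isomorphism $\operatorname{gr}_I S(E)\cong \barS(E)\otimes S(E)^F$; once that input is granted, the rest of the argument is formal, using only the Steinberg tensor product theorem and the polynomial-degree calculation at the end.
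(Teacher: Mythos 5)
Your proof is correct, and it is worth noting where it diverges from the paper's. The paper gets the first assertion by a reduction to the already-known case $b=0$: it first writes $L(\lambda)$ as a composition factor of $L(\sigma)\otimes\tbw(E_n)^{\otimes b}$ for some $a$-good $\sigma$, invokes Proposition 2.8 of \cite{DG4} to place $L(\sigma)$ inside $L(\xi)\otimes L(\tau)^F$ with $\xi$ $a$-special and $\tau$ $a$-good, and then reattaches the exterior-power factor to produce $\mu$. You instead inline the engine behind that cited proposition, filtering $S(E_n)^{\otimes a}\otimes\tbw(E_n)^{\otimes b}$ via the $I$-adic filtration so that the associated graded is $\barS(E_n)^{\otimes a}\otimes\tbw(E_n)^{\otimes b}\otimes(S(E_n)^{\otimes a})^F$; the $G$-equivariance you worry about is not an obstacle, since $I$ and its powers are canonically $G$-stable and $S(E)$ is $G$-equivariantly free over the subalgebra generated by $p$-th powers, which is exactly the fact underlying \cite{DG4}, Proposition 2.8 (one should only note that the filtration is finite in each polynomial degree, so passing to composition factors degree by degree is legitimate). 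For the restricted case the paper cites \cite{DG4}, Lemma 3.1 to force $\tau=0$, whereas you give a short self-contained argument via Steinberg's theorem plus the observation that a restricted $\mu_0+p\pi$ forces $\pi=0$ and the homogeneity of polynomial degree then kills $\mu_1$ and $\tau$; both arguments are sound. The net effect is that your version is self-contained modulo the standard filtration fact, while the paper's is shorter because it leans on two results of \cite{DG4}.
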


\begin{proof}
Since $\lambda$ is $(a,b)$-good we have that $L(\lambda)$ is a composition factor of $S(E_n)^{\otimes a}\otimes \tbw(E_n)^{\otimes b}$. Therefore there exist some $a$-good partition $\sigma\in \Lambda^+(n)$ such that $[L(\sigma)\otimes \tbw(E_n)^{\otimes b}:L(\lambda)]\neq0$. Now by \cite{DG4} Proposition 2.8 (applied in the classical case)  we have that since $\sigma$ is $a$-good then $L(\sigma)$ is a composition factor of $L(\xi)\otimes  L(\tau)^F$ for some $a$-special partition $\xi \in \Lambda^+(n)$ and some $a$-good partition $\tau\in \Lambda^+(n)$. Hence $L(\lambda)$ is a composition factor of the tensor product $(L(\xi)\otimes \tbw(E_n)^{\otimes b})\otimes L(\tau)^F$ and more precisely there exist a partition $\mu\in \Lambda^+(n)$ with $L(\mu)$ be a composition factor of $L(\xi)\otimes \tbw(E_n)^{\otimes b}$ such that $[L(\mu)\otimes L(\tau)^F:L(\lambda)]\neq0$. Since $\xi$ is $a$-special it is clear that $\mu$ is $(a,b)$-special and so we are done. The result for the case where $\lambda$ is restricted follows now directly from the above and using the fact that $\tau$ must be $0$ in this case by \cite{DG4}, Lemma 3.1.

\end{proof}

\q We will use freely in the rest of the paper  the fact that for a restricted partition $\lambda\in\Lambda^+(n)$ the notions of $(a,b)$-good and $(a,b)$-special coincide.

\medskip

\begin{lemma} A partition $\mu$ is $(a,b)$-good  if and only if we have $[\nabla(\lambda):L(\mu)] \neq 0$ for some $(a,b)$-bounded partition  $\lambda$.

\end{lemma}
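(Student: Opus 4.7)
My plan is to prove the stronger statement: $S(E)^{\otimes a}\otimes \tbw(E)^{\otimes b}$ admits a good filtration whose sections are exactly the $\nabla(\lambda)$ with $\lambda\in\Lambda^+(n)$ ranging over the $(a,b)$-bounded partitions (counted with some multiplicities). Since the composition factors of a module with a good filtration are the union of the composition factors of its sections, the lemma is then immediate.

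For the setup, decompose
\[
S(E)^{\otimes a}\otimes \tbw(E)^{\otimes b} \,=\, \bigoplus_{r_1,\ldots,r_a \geq 0}\;\bigoplus_{s_1,\ldots,s_b \geq 0}\, S^{r_1}(E)\otimes\cdots\otimes S^{r_a}(E)\otimes \tbw^{s_1}\!E\otimes\cdots\otimes \tbw^{s_b}\!E,
\]
and use the isomorphisms $S^r(E)\cong\nabla(r)$ and $\tbw^s\!E\cong\nabla(1^s)$. Each summand is then a tensor product of induced modules, and hence, by Mathieu's tensor product theorem for good filtrations, itself admits a good filtration. Iterated application of Pieri's rule (for sections of $-\otimes\nabla(r)$ via horizontal strips) and its transpose (for sections of $-\otimes\nabla(1^s)$ via vertical strips) identifies the $\nabla$-sections of the summand above as those $\nabla(\lambda)$ for which $\lambda$ is built from $\emptyset$ by successively adding horizontal strips of sizes $r_1,\ldots,r_a$ and vertical strips of sizes $s_1,\ldots,s_b$.

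It then remains to check that the set of such $\lambda$, as the $r_i,s_j$ vary, coincides with the set of $(a,b)$-bounded partitions. One direction is routine: after $a$ horizontal strips have been added we have a partition $\sigma$ with $\sigma_{a+1}=0$, and each of the subsequent vertical strips contributes at most one cell to row $a+1$, so the final $\lambda$ satisfies $\lambda_{a+1}\leq b$. For the converse, given an $(a,b)$-bounded $\lambda$, define $\sigma_i=\max(\lambda_i-b,0)$ and $\tau_i=\min(\lambda_i,b)$; then $\sigma$ and $\tau$ are partitions, $\sigma$ has at most $a$ rows, $\tau_1\leq b$, and $\lambda=\sigma+\tau$. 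Taking $r_i=\sigma_i$ realises $\nabla(\sigma)$ as a section of the good filtration of $\nabla(r_1)\otimes\cdots\otimes\nabla(r_a)$ (iterated Pieri), and taking $s_j=\tau'_j$ realises $\nabla(\tau)$ as a section of $\nabla(1^{s_1})\otimes\cdots\otimes\nabla(1^{s_b})$; since $\sigma+\tau$ is the highest weight of $\nabla(\sigma)\otimes\nabla(\tau)$, the module $\nabla(\lambda)$ appears as a section, as required. The main obstacle is precisely this combinatorial identification, handled cleanly by the explicit splitting $\lambda=\sigma+\tau$.
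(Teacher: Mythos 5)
Your proof is correct and takes essentially the same route as the paper, whose proof is a one-line reference to the analogous Lemma 2.2 of \cite{DG4} together with both versions of Pieri's formula ((5.16) and (5.17) of \cite{Mac}) --- exactly the good-filtration-plus-horizontal/vertical-strip argument you spell out. Your explicit splitting $\lambda=\sigma+\tau$ with $\sigma_i=\max(\lambda_i-b,0)$, $\tau_i=\min(\lambda_i,b)$ just fills in the combinatorial step the paper leaves implicit.
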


\begin{proof}

This follows in the same way as  \cite{DG4}, Lemma 2.2 using both versions of Pieri's formula, \cite{Mac},  I, Chapter 5, (5.16), (5.17), this time.

\end{proof}

\q We denote by $\core(\lambda)$ the $p$-core of  a partition $\lambda$.  If $\lambda,\mu\in \Lambda^+(n)$ are in the same block (i.e., the polynomial modules $L(\lambda)$ and $L(\mu)$ for ${\rm GL}_n(K)$ lie in the same block) then $\lambda$ and $\mu$ have the same $p$-core, see e.g., \cite{D2}. Thus $[\nabla(\lambda):L(\mu)]\neq 0$ implies that $\core(\lambda)=\core(\mu)$. Using now Lemma 1.5 we have the following.

\begin{corollary}  Let $\lambda\in \Lambda^+(n)$. 

(i) If $\lambda$ is $(a,b)$-good then $\core(\lambda)$ is $(a,b)$-good.

 (ii) Suppose  $\lambda$ is a $p$-core. Then $\lambda$ is $(a,b)$-good if and only if it is $(a,b)$-bounded.

\end{corollary}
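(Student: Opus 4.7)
The plan is to derive the corollary directly from Lemma 1.5 and the block-theoretic fact (recalled in the paragraph preceding the corollary) that $[\nabla(\mu):L(\nu)]\neq 0$ forces $\core(\mu)=\core(\nu)$. The single auxiliary ingredient I will need is the row-monotonicity of rim $p$-hook removal: since $\core(\sigma)$ is obtained from $\sigma$ by successively stripping rim $p$-hooks, we have the Young diagram containment $\core(\sigma)\subseteq\sigma$, whence $\core(\sigma)_{a+1}\leq\sigma_{a+1}$. In particular, if $\sigma$ is $(a,b)$-bounded then so is $\core(\sigma)$.

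For part (i), I would first assume $\lambda$ is $(a,b)$-good and invoke Lemma 1.5 to produce an $(a,b)$-bounded partition $\mu$ with $[\nabla(\mu):L(\lambda)]\neq 0$. The block argument then yields $\core(\lambda)=\core(\mu)$, and the preliminary observation shows that $\core(\mu)$ is $(a,b)$-bounded; hence so is $\core(\lambda)$. To complete (i) I would apply Lemma 1.5 in the opposite direction, using the trivial identity $[\nabla(\core(\lambda)):L(\core(\lambda))]=1$, to conclude that $\core(\lambda)$ is $(a,b)$-good.

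For part (ii), the implication ``$(a,b)$-bounded $\Rightarrow$ $(a,b)$-good'' is the same last step applied to $\lambda$ itself, and in fact requires no hypothesis on the $p$-core. The converse is immediate from the proof of (i): if $\lambda$ is a $p$-core and is $(a,b)$-good, then $\lambda=\core(\lambda)$ was shown in (i) to be $(a,b)$-bounded.

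I do not anticipate a genuine obstacle; the entire corollary is a formal consequence of Lemma 1.5 together with the two standard inputs (block constancy of the $p$-core and the Young diagram containment $\core(\sigma)\subseteq\sigma$). The content of the statement is that it sets up the reduction, used in later sections, of questions about $(a,b)$-goodness to questions about $p$-cores.
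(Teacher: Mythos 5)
Your proof is correct and is essentially the argument the paper intends: the authors give no written proof, but the sentence preceding the corollary ("$[\nabla(\lambda):L(\mu)]\neq 0$ implies $\core(\lambda)=\core(\mu)$... Using now Lemma 1.5") points to exactly your combination of Lemma 1.5, block-constancy of the $p$-core, and the containment $\core(\sigma)\subseteq\sigma$. Your explicit use of $[\nabla(\nu):L(\nu)]=1$ to get boundedness implies goodness is the right way to close the loop and matches what the paper leaves implicit.
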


 It is easy to check the following stability result by the truncation arguments of \cite{DG4}, Section 3.

\begin{proposition}  Fix $a,b\geq 0$ and let $\lambda$ be an element of $\Lambda^+(n)$.  Then $\lambda$ is $(a,b)$-special (\resp good) if and only if $\lambda$ is $(a,b)$-special (\resp good) when regarded as an element of $\Lambda^+(n')$,  $n'\geq n$.

\end{proposition}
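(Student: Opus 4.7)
The plan is to invoke the truncation (Schur) functor that relates polynomial representations of $\GL_{n'}(K)$ and $\GL_n(K)$, as developed in \cite{DG4}, Section 3. Concretely, for $n\leq n'$ and each polynomial degree $d$ one has an idempotent $e=e_{n,n'}$ in the Schur algebra $S(n',d)$ with $eS(n',d)e\cong S(n,d)$; the functor $M\mapsto eM$ is exact, sends $L_{n'}(\mu)$ to $L_n(\mu)$ when $\mu\in \Lambda^+(n)$ and to $0$ otherwise, and, being multiplication by $e$ on each homogeneous piece, it is compatible with tensor products. Exactness then yields the multiplicity identity $[M:L_{n'}(\mu)]=[eM:L_n(\mu)]$ for every such $\mu$ and every polynomial $\GL_{n'}$-module $M$ of degree $d$.

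The next step is to identify what this functor does to the building blocks entering our definitions. For $\barS(E_{n'})$ and $S(E_{n'})$ it is immediate degree by degree that $e$ cuts these down to $\barS(E_n)$ and $S(E_n)$ respectively (the monomial basis of $\barS^d(E_{n'})$ or $S^d(E_{n'})$ meets the $e$-stable subspace exactly in the monomial basis of $\barS^d(E_n)$ or $S^d(E_n)$). For the exterior algebra, each summand $\tbw^j E_{n'}=L_{n'}(\omega_j)$ is simple with highest weight $\omega_j=1^j$ lying in $\Lambda^+(n)$ precisely when $j\leq n$, so $e$ restricts to the obvious identification $\tbw^j E_{n'}\to \tbw^j E_n$ for $j\leq n$ and annihilates the summands with $j>n$. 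Combining these observations with tensor compatibility gives
\[
e\bigl(\barS(E_{n'})^{\otimes a}\otimes \tbw(E_{n'})^{\otimes b}\bigr)=\barS(E_n)^{\otimes a}\otimes \tbw(E_n)^{\otimes b},
\]
and similarly with $S$ in place of $\barS$.

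Substituting into the multiplicity identity, for any $\lambda\in \Lambda^+(n)$ one obtains
\[
[\barS(E_{n'})^{\otimes a}\otimes \tbw(E_{n'})^{\otimes b}:L_{n'}(\lambda)]=[\barS(E_n)^{\otimes a}\otimes \tbw(E_n)^{\otimes b}:L_n(\lambda)]
\]
and the analogous equality with $S$ in place of $\barS$, which is precisely the stability claimed. No single step presents a serious obstacle; the heart of the argument is the compatibility of the Schur truncation functor with tensor products and its known action on the three distinguished modules $\barS(E)$, $S(E)$ and $\tbw(E)$, all of which is essentially recorded in \cite{DG4}, Section 3. The only mildly delicate bookkeeping concerns the infinite-dimensional module $S(E_{n'})$, which is handled by working one homogeneous component at a time so that the truncation functor on a fixed Schur algebra $S(n',d)$ always applies.
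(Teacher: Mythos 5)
Your argument is correct and is exactly the route the paper intends: the paper gives no details, simply asserting that the statement "is easy to check by the truncation arguments of \cite{DG4}, Section 3", and your write-up via the idempotent $e$ with $eS(n',d)e\cong S(n,d)$, its exactness, its action on simples, its compatibility with tensor products, and its effect on $S(E)$, $\barS(E)$ and $\tbw(E)$ is precisely that argument carried out. The one point worth phrasing carefully is that tensor compatibility follows from the weight-space description of $eM$ (polynomial weights are non-negative, so a weight of $M\otimes N$ is supported on the first $n$ coordinates if and only if both constituent weights are), rather than from "multiplication by $e$" per se.
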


\begin{definition} In view of the proposition above we declare a partition $\lambda$ to be $(a,b)$-special (\resp good) if it is $(a,b)$-special with respect to $n$, for $n$  greater than or equal to the length of $\lambda$.

\end{definition}

\q By the argument of   (cf \cite{DG4}, Section 2,  Lemma 2.4) we have the following result.

\begin{lemma}    If $\lambda$ is $(a,b)$-special (\resp good) and $\mu$ is $(c,d)$-special (\resp good) then $\lambda+\mu$ is $(a+c,b+d)$-special (\resp good).

\end{lemma}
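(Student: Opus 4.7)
The plan is to follow the familiar subquotient strategy that underlies Lemma 2.4 of \cite{DG4}, reducing the statement to the single observation that $L(\lambda+\mu)$ appears with multiplicity one in $L(\lambda)\otimes L(\mu)$.

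First I would set $M=\barS(E_n)^{\otimes a}\otimes \tbw(E_n)^{\otimes b}$ and $N=\barS(E_n)^{\otimes c}\otimes \tbw(E_n)^{\otimes d}$ (for $n$ at least the length of $\lambda+\mu$, which is legal by Proposition 1.8), so that the hypotheses mean $[M:L(\lambda)]\neq 0$ and $[N:L(\mu)]\neq 0$, and the conclusion becomes $[M\otimes N:L(\lambda+\mu)]\neq 0$, since $M\otimes N \cong \barS(E_n)^{\otimes(a+c)}\otimes \tbw(E_n)^{\otimes(b+d)}$. Pick composition series of $M$ and $N$ in which $L(\lambda)$ (respectively $L(\mu)$) appears as a factor; writing these factors as $V_1/V_2$ with $V_2\subset V_1\subset M$ and $W_1/W_2$ with $W_2\subset W_1\subset N$, exactness of the tensor product exhibits $L(\lambda)\otimes L(\mu)$ as the subquotient $(V_1\otimes W_1)/(V_1\otimes W_2 + V_2\otimes W_1)$ of $M\otimes N$. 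So any composition factor of $L(\lambda)\otimes L(\mu)$ is automatically a composition factor of $M\otimes N$.

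The whole content of the lemma is therefore that $L(\lambda+\mu)$ is a composition factor of $L(\lambda)\otimes L(\mu)$. This is standard: $\lambda+\mu$ is a weight of the tensor product (via $v_\lambda\otimes v_\mu$ for highest weight vectors $v_\lambda, v_\mu$), and no weight of $L(\lambda)\otimes L(\mu)$ strictly dominates $\lambda+\mu$, so $\lambda+\mu$ is maximal among the highest weights of composition factors. Moreover the weight space $(L(\lambda)\otimes L(\mu))_{\lambda+\mu}$ is one-dimensional, spanned by $v_\lambda\otimes v_\mu$, and since only composition factors $L(\nu)$ with $\nu\succeq \lambda+\mu$ can contribute to this weight space, $L(\lambda+\mu)$ must occur exactly once.

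The same argument works verbatim with $\barS$ replaced by $S$, giving the \emph{good} version simultaneously. No step here is really an obstacle; the only mild point to be careful about is to use Proposition 1.8 to work in a single $\GL_n$ large enough to accommodate $\lambda+\mu$, so that the comparison $[L(\lambda)\otimes L(\mu):L(\lambda+\mu)]\neq 0$ takes place in a single module category.
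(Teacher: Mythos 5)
Your argument is correct and is essentially the one the paper intends: it gives no proof beyond citing the argument of Lemma 2.4 of \cite{DG4}, which is precisely this subquotient-plus-highest-weight reasoning ($L(\lambda)\otimes L(\mu)$ is a subquotient of the relevant tensor product of algebras, and $L(\lambda+\mu)$ occurs in it because the $\lambda+\mu$ weight space is one-dimensional and maximal). The only cosmetic slip is that the stability statement you invoke is Proposition 1.7 (with Definition 1.8 recording the $n$-independence), but this does not affect the argument.
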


\q On $\barS(E_n)$ and $\tbw(E_n)$ we have the forms  $\barS(E_n)\times \barS(E_n)\to L$ and $\tbw(E_n)\times \tbw(E_n)\to M$, given by multiplication followed by projection onto the top  component, where $L=L_n((p-1)\omega_n)$ and $M=L_n(\omega_n)$.  Thus we have the product form $(\barS(E_n)^{\otimes a}\otimes \tbw(E_n)^{\otimes b})\times (\barS(E_n)^{\otimes a}\otimes \tbw(E_n)^{\otimes b})\to N$, where $N=L_n((a(p-1)+b)\omega_n)$.  For $\lambda\in \Lambda^+(n,r)$, with $\lambda_1\leq a(p-1)+b$, we define $\lambda^\dagger=(a(p-1)+b)\omega_n-w_0\lambda$, where $w_0$ is the longest element of $\Symm(n)$.  The following is obtained as in the proof of Lemma 3.4 of \cite{DG4}.

\begin{proposition} (Reciprocity Principle)    Let $\lambda\in \Lambda^+(n)$ with $\lambda_1\leq a(p-1)+b$. Then  $\lambda$ is $(a,b)$-special if and only if $\lambda^\dagger$ is $(a,b)$-special.

\end{proposition}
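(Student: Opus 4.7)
The plan is to realise the stated reciprocity as a consequence of a duality of $\GL_n(K)$-modules $V\cong V^*\otimes N$, where $V=\barS(E_n)^{\otimes a}\otimes\tbw(E_n)^{\otimes b}$, and then translate this duality into a bijection on composition factors by means of the standard identity $L(\mu)^*\cong L(-w_0\mu)$.

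First I would note that $\omega_n=(1,\ldots,1)$, so $N=L_n((a(p-1)+b)\omega_n)$ is a one-dimensional $\GL_n(K)$-module, namely the $(a(p-1)+b)$-th power of the determinant character. Next I would check that the product form $V\times V\to N$ displayed just before the proposition is a non-degenerate pairing of $\GL_n(K)$-modules. For this one uses that $\barS(E_n)$ and $\tbw(E_n)$ are graded Frobenius algebras: their top graded pieces are $\barS^{n(p-1)}(E_n)=L_n((p-1)\omega_n)$ and $\tbw^n(E_n)=L_n(\omega_n)$, and in each case multiplication
\[
\barS^i(E_n)\times\barS^{n(p-1)-i}(E_n)\to\barS^{n(p-1)}(E_n),\qquad \tbw^j(E_n)\times\tbw^{n-j}(E_n)\to\tbw^n(E_n)
\]
is a perfect pairing. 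Tensoring $a$ copies of the first with $b$ copies of the second yields a perfect pairing on $V$ with values in $N$, which equivariance of multiplication promotes to a $\GL_n(K)$-equivariant isomorphism $V\cong V^*\otimes N$.

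From this isomorphism the proposition is immediate: $L(\lambda)$ is a composition factor of $V$ if and only if $L(\lambda)$ is a composition factor of $V^*\otimes N$. A composition factor of $V^*\otimes N$ has the form $L(\mu)^*\otimes N$ for some composition factor $L(\mu)$ of $V$, and since $L(\mu)^*\cong L(-w_0\mu)$ and $N$ is the one-dimensional character $(a(p-1)+b)\omega_n$, we have
\[
L(\mu)^*\otimes N\cong L\bigl((a(p-1)+b)\omega_n-w_0\mu\bigr)=L(\mu^\dagger).
\]
The hypothesis $\mu_1\le a(p-1)+b$ (which follows from Remark 1.3 applied to each weight of $V$) guarantees that $\mu^\dagger$ is indeed a partition, and the involution $\mu\mapsto\mu^\dagger$ is self-inverse, so the composition factors of $V$ are permuted by $\dagger$. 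Taking $\mu=\lambda$ gives the statement.

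The main obstacle is the verification of non-degeneracy of the multiplication pairings on $\barS(E_n)$ and $\tbw(E_n)$, but this is the classical Frobenius-algebra statement used in Lemma 3.4 of \cite{DG4}, and the only new ingredient here is bookkeeping for the combined truncated-symmetric and exterior tensor factors, which is routine.
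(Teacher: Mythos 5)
Your proof is correct and follows essentially the same route as the paper: the paper sets up exactly this multiplication-into-top-component pairing on $\barS(E_n)^{\otimes a}\otimes \tbw(E_n)^{\otimes b}$ just before the proposition and obtains the result ``as in the proof of Lemma 3.4 of \cite{DG4}'', which is the self-duality $V\cong V^*\otimes N$ combined with $L(\mu)^*\cong L(-w_0\mu)$ that you spell out. No discrepancies.
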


\begin{proposition}   Let $n\geq 2$ and $a,b\geq 0$. If $\lambda=(\lambda_1,\ldots,\lambda_n)$ is an $(a,b)$-special (\resp good)  partition  then $(\lambda_1,\ldots,\lambda_{n-1})$ and $(\lambda_2,\ldots,\lambda_n)$ are $(a,b)$-special (\resp good). 

\end{proposition}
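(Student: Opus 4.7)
The plan is to prove both truncation statements (special and good) by restricting to a Levi subgroup and applying a weight-space projection.

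For the first truncation, set $\mu = (\lambda_1,\ldots,\lambda_{n-1})$ and consider the Levi subgroup $L = \GL_{n-1}(K)\times \GL_1(K)$ of $\GL_n(K)$ coming from the splitting $E_n = E_{n-1} \oplus Ke_n$. As $L$-modules one has
\[
\barS(E_n)\cong \barS(E_{n-1}) \otimes \barS(Ke_n), \qquad \tbw(E_n) \cong \tbw(E_{n-1}) \otimes \tbw(Ke_n),
\]
so for $M = \barS(E_n)^{\otimes a} \otimes \tbw(E_n)^{\otimes b}$ the $\GL_1$-weight-$t$ subspace $M_t$ is, as a $\GL_{n-1}(K)$-module, a direct sum of copies of $\barS(E_{n-1})^{\otimes a} \otimes \tbw(E_{n-1})^{\otimes b}$, the multiplicity being nonzero for every integer $0\le t \le a(p-1)+b$. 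The functor $V \mapsto V_t$ is exact on $L$-modules, so since $L_n(\lambda)$ is a composition factor of $M$, the $\GL_{n-1}$-module $L_n(\lambda)_{\lambda_n}$ is a subquotient of $M_{\lambda_n}$. A highest weight vector $v_\lambda \in L_n(\lambda)$ has $\GL_1$-weight $\lambda_n$ (for the last coordinate) and is a $B_{n-1}$-eigenvector of weight $\mu$, so it generates over $\GL_{n-1}$ a highest weight module with head $L_{n-1}(\mu)$. Since $\lambda_n \le a(p-1)+b$ by Remark 1.3, the relevant multiplicity in $M_{\lambda_n}$ is nonzero, so $L_{n-1}(\mu)$ is a composition factor of $\barS(E_{n-1})^{\otimes a} \otimes \tbw(E_{n-1})^{\otimes b}$. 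Hence $\mu$ is $(a,b)$-special with respect to $\GL_{n-1}$, and so $(a,b)$-special in the sense of Definition 1.8 by the stability Proposition 1.7.

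For the second truncation the identical argument applies using the Levi $L' = \GL_1(K) \times \GL_{n-1}(K)$ associated to $E_n = Ke_1 \oplus \langle e_2, \ldots, e_n \rangle$. Now $v_\lambda$ has $\GL_1$-weight $\lambda_1$ and is a $B$-highest weight vector for $\GL_{n-1}$ of weight $\nu = (\lambda_2,\ldots,\lambda_n)$, and the bound $\lambda_1 \le a(p-1)+b$ (Remark 1.3 again) ensures the multiplicity of $\barS(E_{n-1})^{\otimes a} \otimes \tbw(E_{n-1})^{\otimes b}$ in $M_{\lambda_1}$ is nonzero. The $(a,b)$-good case is handled by replacing $\barS(E_n)$ by $S(E_n)$ throughout; the decomposition $S(E_n)\cong S(E_{n-1})\otimes S(Ke_n)$ still holds, and $S(Ke_n)$ has a nonzero $\GL_1$-weight space of every nonnegative weight, so no bound on $\lambda_1$ or $\lambda_n$ is needed and the argument is in fact simpler.

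The only step that needs real checking is the identification of $M_t$ as a $\GL_{n-1}$-module in terms of $\barS(E_{n-1})^{\otimes a} \otimes \tbw(E_{n-1})^{\otimes b}$, together with the nonvanishing of the multiplicity; once this is in place, exactness of the weight-space functor $V \mapsto V_t$ and the standard fact that a $B^+$-eigenvector of weight $\mu$ generates a highest weight module with head $L(\mu)$ deliver the required composition factor, and Proposition 1.7 promotes the $\GL_{n-1}$-statement to one in general $n$.
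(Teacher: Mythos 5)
Your proof is correct and follows essentially the same route as the paper's: restrict to the Levi $\GL_{n-1}(K)\times\GL_1(K)$ coming from $E_n=E_{n-1}\oplus Ke_n$, use the tensor decompositions of $\barS(E_n)$ (resp.\ $S(E_n)$) and $\tbw(E_n)$, and identify $L_{n-1}(\barlambda)\otimes L_1(\lambda_n)$ as a composition factor of the appropriate piece. The only difference is that you justify this last step explicitly via exactness of the $\GL_1$-weight-space functor and the highest weight vector, where the paper simply asserts it; the substance is the same.
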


\begin{proof}  We give the argument for $(a,b)$-good. The $(a,b)$-special case is similar. We put $\barlambda=(\lambda_1,\ldots,\lambda_{n-1})$. Consider the natural module $E=E_n$ for $\GL_n(K)$. We have $E_n=E_{n-1}\oplus L$, where $L$ is the $K$-span of $e_n$ (and $E_{n-1}$ is the $K$-span of $e_1,\ldots,e_{n-1}$).  We regard $H=\GL_{n-1}\times \GL_1$ as a subgroup of $\GL_n(K)$, in the obvious way.  Then $E_n=E_{n-1}\oplus L$ is an $H$-module decomposition. Since $L(\lambda)$ is a composition factor of $S(E_n)^{\otimes a}\otimes \tbw(E)^{\otimes b}$ it is a composition factor of $S^\alpha E_n\otimes \tbw^\beta E_n$, for some sequences $\alpha=(\alpha_1,\ldots,\alpha_a)$ and $\beta=(\beta_1,\ldots,\beta_b)$ with at most $a$ and $b$ parts.    The $H$-module $L(\lambda)$ has highest weight $\lambda$ and so has ($H$-module) composition factor $L_{n-1}(\barlambda)\otimes L_1(\lambda_n)$.

\q For  $r,s\geq 0$, $S^r(E)=\bigoplus_{r=r_1+r_2} S^{r_1}(E_{n-1}) \otimes S^{r_2} L$ and  \\
$\tbw^s(E_n)=\bigoplus_{s=s_1+s_2} \tbw^{s_1}(E_{n-1})\otimes \tbw^{s_2} L$, as $H$-modules. It follows that \\
$L_{n-1}(\barlambda)\otimes L_1(\lambda_n)$ must be a composition factor of a module of the form $S^{u_1}(E_{n-1})\otimes \cdots\otimes S^{u_a}(E_{n-1})\otimes \tbw^{v_1}(E_{n-1})\otimes\cdots \otimes  \tbw^{v_b}(E_{n-1})\otimes M$, for some $u_1,\ldots,u_a\geq 0$. $v_1,\ldots,v_b\geq 0$ and one dimensional $\GL_1(K)$-module $M$.  Restricting to $\GL_{n-1}(K)$ gives that $\barlambda$ is $(a,b)$-good.

\q The result for $(\lambda_2,\ldots,\lambda_n)$ is obtained by restricting to $\GL_1(K)\times \GL_{n-1}(K)$ and arguing in the same way.

\end{proof}

\begin{corollary} Suppose $\lambda\in \Lambda^+(n)$ and $\lambda_1=a(p-1)+b$. Then $\lambda$ is $(a,b)$-special if and only if $(\lambda_2,\ldots,\lambda_n)$ is $(a,b)$-special. 

\end{corollary}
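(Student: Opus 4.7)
The forward direction is immediate from Proposition 1.11, so the essential content is the converse, and the plan is to give a single chain of equivalences covering both directions by invoking the Reciprocity Principle (Proposition 1.10) twice, once in $\Lambda^+(n)$ and once in $\Lambda^+(n-1)$, bridged by the stability statement of Proposition 1.8. The driving observation is that the hypothesis $\lambda_1 = a(p-1)+b$ is precisely the extremal case of the bound in Remark 1.3, which causes $\lambda^\dagger$ to have a trailing zero and thus collapse to a partition of length at most $n-1$.

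Set $c = a(p-1)+b$. The first step is to compute $\lambda^\dagger$ in $\Lambda^+(n)$. Since $w_0 \in \Symm(n)$ reverses an $n$-tuple, one obtains $\lambda^\dagger = (c-\lambda_n,\, c-\lambda_{n-1},\,\ldots,\, c-\lambda_2,\, c-\lambda_1)$, and the hypothesis $\lambda_1 = c$ forces the final coordinate to vanish. Hence $\lambda^\dagger$ may be regarded as a partition in $\Lambda^+(n-1)$, extended by a zero, and Proposition 1.8 gives that $\lambda^\dagger$ is $(a,b)$-special with respect to $n$ if and only if its truncation $(c-\lambda_n,\ldots,c-\lambda_2)$ is $(a,b)$-special with respect to $n-1$.

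The second step is the analogous computation for $\nu = (\lambda_2,\ldots,\lambda_n) \in \Lambda^+(n-1)$. Since $w_0 \in \Symm(n-1)$ reverses $(n-1)$-tuples and $\nu_1 = \lambda_2 \leq \lambda_1 = c$ (so the hypothesis of Proposition 1.10 is satisfied for $\nu$), one computes $\nu^\dagger = (c-\lambda_n,\ldots,c-\lambda_2)$, which agrees exactly with the truncation of $\lambda^\dagger$ above.

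Assembling the equivalences: $\lambda$ is $(a,b)$-special $\iff$ $\lambda^\dagger$ is (by Proposition 1.10 in degree $n$) $\iff$ $\nu^\dagger$ is (by Proposition 1.8, discarding the trailing zero) $\iff$ $\nu = (\lambda_2,\ldots,\lambda_n)$ is (by Proposition 1.10 in degree $n-1$). Everything needed has been proved earlier, so the only items to verify are the compatibility of the two $\dagger$-operations and that both applications of the Reciprocity Principle are legitimate; I do not expect any real obstacle here, as both are routine, and the argument is essentially a bookkeeping exercise exploiting that $\lambda_1$ sits at the extremal value $c$.
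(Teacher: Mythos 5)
Your argument is correct and is essentially the paper's own proof: both apply the Reciprocity Principle in $\Lambda^+(n)$, observe that the hypothesis $\lambda_1=a(p-1)+b$ kills the last coordinate of $\lambda^\dagger$ so it lives in $\Lambda^+(n-1)$, and then apply reciprocity again in degree $n-1$ to recover $(\lambda_2,\ldots,\lambda_n)$. (The stability statement you invoke to drop the trailing zero is Proposition 1.7 rather than 1.8, but that is only a numbering slip.)
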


\begin{proof} Applying the reciprocity principle with respect to $n$ we get that $\lambda$ is $(a,b)$-special if and only if $(a(p-1)+b)\omega_n-(\lambda_n,\ldots,\lambda_1)$ is $(a,b)$-special, i.e., if and only if $(a(p-1)+b)\omega_{n-1}-(\lambda_n,\ldots,\lambda_2)$ is $(a,b)$-special. Now applying the reciprocity principle with respect to $n-1$ we see that this holds if and only if 
$(\lambda_2,\ldots,\lambda_n)$ is $(a,b)$-special.

\end{proof}

\subsection*{Bounded Modules}

\q We remind to the reader that a filtration $0=V_0\leq V_1\leq \cdots\leq V_r=V$ of  a finite dimensional polynomial $GL_n(K)$-module $V$ is said to be good if for each $1\leq i\leq r$ the quotient $V_i/V_{i-1}$ is either zero or isomorphic to $\nabla(\lambda^i)$ for some $\lambda^i\in \Lambda^+(n)$.  For a polynomial $GL_n(K)$-module $V$ admitting a good filtration for each $\lambda\in \Lambda^+(n)$, the multiplicity 
$|\{1\leq i\leq r\vert V_i/V_{i-1}\cong \nabla(\lambda)\}|$ is independent of the choice of the good filtration, and will be denoted $(V:\nabla(\lambda))$. 
 
\medskip

\q We fix $a,b\geq 0$.

\begin{definition}  Let $M$ be a finite dimensional polynomial module with a good filtration. We say that $M$ is $(a,b)$-bounded if each $\lambda\in \Lambda^+(n)$ such that $(M:\nabla(\lambda))\neq 0$ is $(a,b)$-bounded.  We say that $M$ is $(a,b)$-deficient if  $(M:\nabla(\lambda))=0$ for every $(a,b)$-bounded element $\lambda$ of $\Lambda^+(n)$. 

\end{definition}

\begin{remark} Note that if $M$ is a finite dimensional polynomial  module with a good filtration and character $\chi=\sum_{\lambda\in \Lambda^+(n)} r_\lambda \chi(\lambda)$ then $M$ is $(a,b)$-bounded if $\lambda$ is $(a,b)$-bounded whenever $r_\lambda\neq 0$ and $M$ is a $(a,b)$-deficient if $r_\lambda=0$ for all $(a,b)$-bounded $\lambda$.  Here $\chi(\lambda)$ is the character of $\nabla(\lambda)$, i.e., the Schur function corresponding to the partition $\lambda$.
\end{remark}

\begin{lemma}  Let $M$ be finite dimensional polynomial module with a good filtration and suppose that $M$ is $(a,b)$-deficient. Then for every finite dimensional polynomial module $V$ with a good filtration the polynomial module $M\otimes V$ is $(a,b)$-deficient.

\end{lemma}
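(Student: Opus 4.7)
The plan is to argue at the level of characters, using the Littlewood--Richardson rule and the containment property of LR coefficients. First I would invoke the standard theorem (due to Wang, and extended by Donkin and Mathieu) that a tensor product of polynomial modules admitting good filtrations again admits a good filtration; this is what makes the notion of ``$(a,b)$-deficient'' meaningful for $M\otimes V$, and by Remark 1.13 it suffices to check that the coefficient of $\chi(\lambda)$ in $\chi(M\otimes V)$ vanishes for every $(a,b)$-bounded $\lambda$.

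Next, write $\chi(M)=\sum_{\mu} r_\mu\,\chi(\mu)$ and $\chi(V)=\sum_{\nu} s_\nu\,\chi(\nu)$, where the $\chi(\mu)$ are Schur functions; by hypothesis $r_\mu=0$ whenever $\mu$ is $(a,b)$-bounded. Multiplying and applying the Littlewood--Richardson rule yields
$$\chi(M\otimes V)=\sum_\lambda\Bigl(\sum_{\mu,\nu} r_\mu s_\nu\, c^\lambda_{\mu,\nu}\Bigr)\chi(\lambda),$$
so that $(M\otimes V:\nabla(\lambda))=\sum_{\mu,\nu} r_\mu s_\nu\, c^\lambda_{\mu,\nu}$ for each $\lambda\in\Lambda^+(n)$.

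The decisive step is the containment property of Littlewood--Richardson coefficients: $c^\lambda_{\mu,\nu}\neq 0$ forces $\mu\subseteq\lambda$ as Young diagrams, and in particular $\mu_{a+1}\leq\lambda_{a+1}$. Fix an $(a,b)$-bounded $\lambda$, so $\lambda_{a+1}\leq b$. Then every $\mu$ contributing nontrivially to the displayed sum satisfies $\mu_{a+1}\leq b$, i.e., is itself $(a,b)$-bounded, and so $r_\mu=0$ by the deficiency hypothesis on $M$. Hence $(M\otimes V:\nabla(\lambda))=0$ for every $(a,b)$-bounded $\lambda$, which is exactly the assertion that $M\otimes V$ is $(a,b)$-deficient.

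The only substantive external input is the good-filtration theorem for tensor products; once that is in hand the argument reduces to the elementary observation that LR coefficients respect diagram containment in their lower-left entry. I therefore do not anticipate a serious obstacle, and the proof should occupy no more than a few lines.
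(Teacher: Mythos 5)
Your proof is correct and follows essentially the same route as the paper's: both reduce to a character computation and exploit the fact that multiplying Schur functions only produces partitions containing the given one, so $(a,b)$-boundedness cannot be created by tensoring. The only cosmetic difference is that you invoke the full Littlewood--Richardson rule with its containment property, whereas the paper reduces to Pieri's formula by expressing an arbitrary symmetric function in terms of the elementary symmetric functions $e_r=\chi(1^r)$.
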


\begin{proof} By the above remark it is enough to show that the coefficient of $\chi(\lambda)$ in the character of $M\otimes V$ is zero for all $(a,b)$-bounded 
$\lambda\in \Lambda^+(n)$.  It follows that it is enough to note that for $\lambda,\mu\in \Lambda^+(n)$ with $\lambda$  being not $(a,b)$-bounded
 the coefficient of $\chi(\tau)$  in $\chi(\lambda)\chi(\mu)$ is $0$ for all $(a,b)$-bounded $\tau\in \Lambda^+(n)$.  So it is enough to show that for any symmetric function $\psi$ in $n$ variables $ \psi\chi(\lambda)$ is a $\zed$-linear combination of Schur symmetric functions $\chi(\tau)$ with $\tau$ not $(a,b)$-bounded. The ring of symmetric function is generated by the elementary symmetric functions $e_r=\chi(1^r)$, for $1\leq r\leq n$ so it enough to show that each $e_r\chi(\lambda)$ is a sum of terms $\chi(\tau)$, with $\tau$ not $(a,b)$-bounded. However, by Pieri's formula $e_r\chi(\lambda)$ is a sum of terms $\chi(\tau)$ where the diagram of $\tau$ is obtained by adding boxes to the diagram of $\lambda$, so the result is clear.

\end{proof}

\q As in \cite{DG4}, we write $I(\lambda)$ for the injective indecomposable polynomial module corresponding to the partition $\lambda$. If $\lambda\in \Lambda^+(n)$ then we write $I_n(\lambda)$ for this  polynomial injective ${\rm GL}_n(K)$-module if we wish to emphasise the role of $n$.

\begin{lemma} Let $\lambda\in \Lambda^+(n)$. Then $\lambda$ is $(a,b)$-good if and only if $I(\lambda)$ is not $(a,b)$-deficient.

\end{lemma}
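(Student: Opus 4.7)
The plan is to combine Lemma 1.5 with the standard Brauer-type reciprocity for polynomial injective modules, which states
\[
(I(\lambda):\nabla(\mu)) = [\nabla(\mu):L(\lambda)]
\]
for all $\lambda,\mu\in \Lambda^+(n)$. Since $I(\lambda)$ is a finite dimensional polynomial injective module it admits a good filtration, so the multiplicity on the left-hand side is well defined, and the identity is a classical result (see e.g.\ the theory of the Schur algebra in \cite{D2}).

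First I would unwind the definition: $I(\lambda)$ is $(a,b)$-deficient precisely when $(I(\lambda):\nabla(\mu))=0$ for every $(a,b)$-bounded $\mu\in \Lambda^+(n)$. Applying the reciprocity formula, this is equivalent to the statement that $[\nabla(\mu):L(\lambda)]=0$ for every $(a,b)$-bounded $\mu\in \Lambda^+(n)$. Therefore $I(\lambda)$ is \emph{not} $(a,b)$-deficient if and only if there exists an $(a,b)$-bounded partition $\mu$ with $[\nabla(\mu):L(\lambda)]\neq 0$.

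Now I invoke Lemma 1.5, which asserts exactly that $\lambda$ is $(a,b)$-good if and only if $[\nabla(\mu):L(\lambda)]\neq 0$ for some $(a,b)$-bounded $\mu\in \Lambda^+(n)$. Chaining the two equivalences yields the desired statement.

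There is essentially no technical obstacle here: the only thing worth checking carefully is that $I(\lambda)$ lies in the framework of Definition 1.12 (i.e.\ finite dimensional with a good filtration), which is guaranteed since the polynomial injective hull of $L(\lambda)$ in the degree $|\lambda|$ component of the polynomial category is finite dimensional and injective objects in this category have good filtrations. Once that is noted, the proof is a one-line combination of Lemma 1.5 and reciprocity.
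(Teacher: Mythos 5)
Your proof is correct and follows exactly the paper's own argument: unwind the definition of $(a,b)$-deficient, apply the reciprocity $(I(\lambda):\nabla(\mu))=[\nabla(\mu):L(\lambda)]$ (the paper cites \cite{DqSchur}, Section 4, (6) for this), and conclude via Lemma 1.5. Nothing further is needed.
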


\begin{proof} We have that $\lambda$ is $(a,b)$-good if and only if there exists some $(a,b)$-bounded partition $\mu$ such that $[\nabla(\mu):L(\lambda)]\neq 0$. By reciprocity, see e.g., \cite{DqSchur}, Section 4, (6),  this is if and only if there exists an $(a,b)$-bounded partition $\mu$ such that $(I(\lambda):\nabla(\mu))\neq 0$, i.e.,  if and only if $I(\lambda)$ is not $(a,b)$-deficient.

\end{proof}

\q We   give some additional standard terminology.

\medskip

\begin{definitions}

{\rm We denote the length of a partition $\lambda$ by $\len(\lambda)$. Let $\lambda$ be a partition.  

(i)  We call a node $R$ of $\lambda$ (or more precisely  the diagram of $\lambda$)    {\it removable} if the removal of $R$ from the diagram of $\lambda$ leaves the diagram of a partition, which will be denoted $\lambda_R$. Thus the node  $R$ is removable node if it has the form $(i,\lambda_i)$ for some $1\leq i\leq  \len(\lambda)$ and either $i=\len(\lambda)$ or 
$\lambda_i > \lambda_{i+1}$.

(ii)  An {\it addable} node $A$ of $\lambda$ is an element of $\nat\times \nat$ such that the addition of $A$ to the diagram of $\lambda$ gives the diagram of a partition, which will be denoted $\lambda^A$. Thus $A$ is addable if it has the form $(i,\lambda_i+1)$ for some $1\leq i\leq \len(\lambda)+1$ and either $i=1$ or $\lambda_{i-1}>\lambda_i$ or $A=(\len(\lambda)+1,1)$. 

(iii)  The {\it residue} of a node $A=(i,j)$ of a  $\lambda$ is defined to be the congruence class of  $i-j$ modulo $p$. 

(iv) Let $A$ and $B$ be removable or addable nodes of $\lambda$. We  shall say that  $A$ is {\it lower} than $B$ if  $A=(i,r)$, $B=(j,s)$ and $i>j$. 

(v) We call a  removable node $R$ {\it suitable} if the residue of $R$ is different from the residue of $A$ for all lower addable nodes of $\lambda$.}

\end{definitions}

\medskip

\begin{lemma}   Let $a,b\geq 0$. Suppose that $\lambda$ is a partition and $R=(h,\lambda_h)$ is a suitable node. Then for all sufficiently large $n$ we have:

(i) $I_n(\lambda)$ is a direct summand of $I_n(\lambda_R)\otimes E_n$;

(ii)  if $I_n(\lambda_R)$ is $(a,b)$-deficient then so is $I_n(\lambda)$.

Furthermore if  $\lambda$ is $(a,b)$-good then so is $\lambda_R$.

\end{lemma}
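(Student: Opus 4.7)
Parts (ii) and the ``furthermore'' clause will follow formally from (i); the real work is (i).

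For (i), take $n$ large enough that $\len(\lambda) < n$, so that the list of addable and removable nodes of $\lambda$ considered as a partition in $\Lambda^+(n)$ is stable in $n$. The goal is to show that
$$\dim \Hom_{\GL_n(K)}\bigl(I_n(\lambda),\, I_n(\lambda_R)\otimes E_n\bigr) \;\geq\; 1,$$
since this Hom equals the multiplicity of $I_n(\lambda)$ as an indecomposable summand of $I_n(\lambda_R)\otimes E_n$ ($\End I_n(\lambda)$ being local). I would evaluate this Hom via the adjunction between $-\otimes E_n$ and $-\otimes E_n^*$, using the identifications $E_n^*\cong \tbw^{n-1}E_n\otimes\det^{-1}$ and $I_n(\mu)\otimes\det = I_n(\mu+\omega_n)$ to keep everything inside the polynomial category, and then apply Pieri's formula to express the result as a count of ways of adding a single box to $\lambda_R$ so as to land on a partition in the appropriate $\GL_n(K)$-block. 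The removal of $R$ itself supplies one such box; everything rests on showing that this contribution is not cancelled by parallel contributions coming from other addable nodes.

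Cancellations between addable nodes arise only between nodes of the same residue modulo $p$, reflecting the block structure of the Schur algebra $S(n,r)$. The hypothesis that $R$ is \emph{suitable}---that its residue differs from that of every addable node of $\lambda$ lying strictly below $R$---is precisely what prevents such a cancellation at the key node and forces the Hom-dimension to be positive, yielding $I_n(\lambda)$ as a summand. Carrying out this residue/block book-keeping cleanly, via the translation-functor behaviour on the Schur algebra, is the main technical obstacle; the flavour is that of a Schur-algebra analogue of modular branching arguments.

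For (ii), observe that $E_n\cong \nabla(\omega_1)$ has a good filtration, so Lemma~1.14 applied with $M=I_n(\lambda_R)$ and $V=E_n$ shows that $I_n(\lambda_R)\otimes E_n$ is $(a,b)$-deficient whenever $I_n(\lambda_R)$ is. By the Donkin--Mathieu theorem, the direct summand $I_n(\lambda)$ supplied by (i) has a good filtration with multiplicities $(I_n(\lambda):\nabla(\tau))$ bounded by those of the ambient module; so $I_n(\lambda)$ is itself $(a,b)$-deficient. The ``furthermore'' is then the contrapositive of (ii) combined with Lemma~1.16: $\lambda$ being $(a,b)$-good means $I_n(\lambda)$ is not $(a,b)$-deficient, hence $I_n(\lambda_R)$ is not $(a,b)$-deficient, hence $\lambda_R$ is $(a,b)$-good.
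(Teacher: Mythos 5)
Your argument for part (i) is a plan rather than a proof, and the one step it defers is exactly the content of the lemma. Two concrete problems. First, the reduction to a Hom-space is set up incorrectly: for an indecomposable module $X$ with local endomorphism ring, $\dim\Hom(X,M)\geq 1$ does \emph{not} imply that $X$ is a direct summand of $M$ (e.g.\ $\Hom(I_n(\lambda),I_n(\mu))\neq 0$ whenever $[I_n(\lambda):L(\mu)]\neq 0$, without $I_n(\lambda)$ being a summand of $I_n(\mu)$); the multiplicity of $X$ as a summand is $\dim\Hom(X,M)$ \emph{minus} the dimension of the radical of that Hom-space. The quantity that does compute the summand multiplicity here is $\dim\Hom(L(\lambda),\,I_n(\lambda_R)\otimes E_n)$, which is legitimate because $I_n(\lambda_R)\otimes E_n$ is again injective in the polynomial category; by your adjunction this becomes a \emph{composition} multiplicity $[L(\lambda)\otimes E_n^{*}:L(\lambda_R)]$ (since $\dim\Hom(V,I_n(\mu))=[V:L(\mu)]$), not a count of $\nabla$-sections, so Pieri's formula does not apply to it directly — Pieri controls the good-filtration multiplicities of $I_n(\lambda_R)\otimes E_n$, which bound the socle from above but do not by themselves produce $L(\lambda)$ in the socle. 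Second, and decisively, the assertion that suitability of $R$ "prevents cancellation and forces the Hom-dimension to be positive" is precisely the modular-branching-type statement the lemma encodes, and you explicitly leave it as "the main technical obstacle". The paper does not reprove it either: its proof of this lemma is the citation to \cite{DG4}, Lemma 3.11, where that residue/block analysis is actually carried out. Without it, (i) is unproved.

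Parts (ii) and the final assertion are fine modulo (i): $E_n\cong\nabla(\omega_1)$ has a good filtration, so Lemma 1.14 gives that $I_n(\lambda_R)\otimes E_n$ is $(a,b)$-deficient when $I_n(\lambda_R)$ is; a direct summand of a module with good filtration has a good filtration with componentwise smaller multiplicities (good-filtration multiplicities being additive over direct sums), so $I_n(\lambda)$ inherits deficiency; and the "furthermore" clause is the contrapositive of (ii) together with Lemma 1.16. But all of this rests on (i), which is where the gap lies.
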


\begin{proof}  See the proof of Lemma 3.11 of \cite{DG4}.

\end{proof}

\bs\bs\bs



\section{The determination of the $(2,1)$-special partitions.}

\q We are interested in the $(2,1)$-special partitions.  We first record the description of $2$-special partitions  obtained by taking $m=2$ in   \cite{DG4}, Theorem 6.4.  Recall that a partition is $1$-special if and only if it has the form $(p-1)^ka$, for some $k\geq 0$ and $p-1>a\geq 0$, see e.g., \cite{DG4}, 4.13 Remark. We split the description into the restricted and non-restricted cases.   In order to cut down the number of special cases considered we assume now that $p>2$. This is harmless since the main result of this section, Theorem 2.10 is easily seen to hold also in the characteristic $2$ case.

\begin{lemma} A restricted partition $\lambda$ is $2$-special  if and only if has one of the following forms:

(i) $\lambda=(p-2)^kab$ with $k\geq0$ and $p-2>a\geq b\geq 0$;

(ii) a sum of two $1$-special partitions, i.e., we have  \\
$\lambda= (2(p-1))^j(p-1+a) (p-1)^kb$, 
with $j,k\geq 0$  and $0\leq a,b\leq p-2$, if $k\geq1$ and $0\leq a\leq b\leq p-2$ if $k=0$.

\end{lemma}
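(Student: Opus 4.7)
The plan is to derive the lemma directly from Theorem 6.4 of \cite{DG4} specialised to $m=2$. That theorem supplies a complete combinatorial description of $m$-special partitions, presenting each either as a primitive $m$-special (drawn from an explicit short list) or as the sum of a primitive with $1$-special partitions. For $m=2$ this collapses into the two families displayed in the statement, and the work here is essentially to unpack the general description in the restricted case.

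First I would recall that a $1$-special partition has the form $(p-1)^k a$ with $0\leq a\leq p-2$, and then write out the $m=2$ instance of Theorem 6.4 of \cite{DG4}. The primitive $2$-specials from that list which satisfy the restricted condition $\lambda_i-\lambda_{i+1}\leq p-1$ are exactly $(p-2)^k a b$ with $p-2>a\geq b\geq 0$, giving family (i). For the sum case I would write the two $1$-special summands as $\mu=(p-1)^r s$ and $\nu=(p-1)^t u$ with $r\geq t$ (after possibly swapping the roles of $\mu$ and $\nu$); addition yields
\[
\mu+\nu = (2(p-1))^t\,(p-1+s)\,(p-1)^{r-t}\,u,
\]
which, on renaming $(j,a,k,b)=(t,s,r-t,u)$, is precisely the shape in family (ii). The dichotomy $k\geq 1$ versus $k=0$ and the ordering constraint $a\leq b$ when $k=0$ are dictated by the restricted condition at the transition $(p-1+a)\to b$: when $k\geq 1$ this transition is mediated by intervening copies of $p-1$ and no ordering is needed, whereas when $k=0$ the inequality $(p-1+a)-b\leq p-1$ forces $a\leq b$.

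For the converse direction, partitions of form (i) appear explicitly as primitive $2$-specials in Theorem 6.4 of \cite{DG4}, and partitions of form (ii) are $2$-special by Lemma 1.9 applied to their presentation as the sum of two $1$-specials. The main obstacle, though essentially bookkeeping rather than a deep matter, is to run the case analysis from the general classification without duplicating or missing shapes, and to confirm that the boundary constraints $p-2>a\geq b\geq 0$, $0\leq a,b\leq p-2$, together with the extra ordering $a\leq b$ in the $k=0$ subcase, precisely carve out the restricted $2$-special partitions within the output of Theorem 6.4.
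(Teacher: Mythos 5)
Your overall approach coincides with the paper's: Lemma 2.1 is stated there without proof, precisely as the $m=2$ specialisation of Theorem 6.4 of \cite{DG4} (together with the fact that $1$-special partitions are those of the form $(p-1)^k a$ with $0\leq a\leq p-2$), split into the restricted and non-restricted cases. So citing that theorem and unpacking it is exactly what is intended.

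However, the one piece of actual computation you carry out is wrong. For $\mu=(p-1)^r s$ and $\nu=(p-1)^t u$ with $r>t$, the componentwise sum is
$$\mu+\nu=(2(p-1))^{t}\,(p-1+u)\,(p-1)^{r-t-1}\,s,$$
not $(2(p-1))^{t}(p-1+s)(p-1)^{r-t}u$: the part $u$ of $\nu$ sits in row $t+1$ and so is added to a $(p-1)$ of $\mu$, while $s$ survives unchanged as the last row; your expression also has $r+2$ rows where the sum has only $r+1$. (Concretely, for $p=5$, $(4,4,3)+(4,2)=(8,6,3)$, whereas your formula gives $(8,7,4,2)$.) With the correct formula the identification is $(j,a,k,b)=(t,u,r-t-1,s)$, so $k=0$ corresponds to $r=t+1$ and the restricted condition at the junction $(p-1+u)\to s$ gives $u\leq s$, i.e.\ $a\leq b$, as in the statement. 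The case $r=t$ then needs a separate sentence: the sum is $(2(p-1))^t(s+u)$, which is restricted only when $t=0$ (landing in family (i)) or when $s+u=p-1$ (the degenerate instance $j=t$, $a=b=0$, $k=0$ of family (ii)). These are small repairs, but as written the displayed identity and the ensuing renaming do not establish the claimed normal form.
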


\begin{proposition} Let $\lambda$ be a partition and write $\lambda=\lambda^0+p\barlambda$, with $\lambda^0$ restricted.  Then $\lambda$ is $2$-special and non-restricted  if and only if \\
$\lambda^0=(p-2)^kab$, with $k\geq 0$, $p-2 >  a\geq b\geq 0$  and $\barlambda=\omega_s$, with $s\geq 1$.

\end{proposition}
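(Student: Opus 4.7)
My plan is to specialise Theorem~6.4 of \cite{DG4} to $m=2$ and extract the non-restricted case; the restricted part of that classification is exactly Lemma~2.1 above. The key quantitative ingredient is the weight bound from Remark~1.3, which asserts $\lambda_1 \leq 2(p-1)$ for every $2$-special partition $\lambda$.

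For the necessity direction, I would start from the Steinberg decomposition $\lambda = \lambda^0 + p\bar\lambda$ and exploit the hypothesis $\bar\lambda \neq 0$, which gives $\bar\lambda_1 \geq 1$. The bound then yields
\[
\lambda^0_1 + p \,\leq\, \lambda^0_1 + p\bar\lambda_1 \,=\, \lambda_1 \,\leq\, 2(p-1),
\]
so $\bar\lambda_1 = 1$ (equivalently $\bar\lambda = 1^s = \omega_s$ for some $s \geq 1$) and $\lambda^0_1 \leq p-2$. Next, I would appeal to Theorem~6.4 of \cite{DG4} to deduce that $\lambda^0$ is itself restricted $2$-special, so by Lemma~2.1 it is of form (i) or (ii). Since every partition of form (ii) has first row at least $p-1$, the bound $\lambda^0_1 \leq p-2$ eliminates form (ii) and forces $\lambda^0$ to be of form (i).

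For the sufficiency direction, I would verify that every $\lambda = (p-2)^kab + p\omega_s$ with $p-2 > a \geq b \geq 0$, $k \geq 0$, $s \geq 1$ is indeed $2$-special, again reading this off from Theorem~6.4 of \cite{DG4}.

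The main obstacle is precisely this sufficiency step: it requires an explicit realisation of $L(\lambda)$ as a composition factor of $\barS(E)^{\otimes 2}$. A direct attempt via Lemma~1.9, by writing $\lambda$ as a sum of two $1$-special partitions, breaks down as soon as $b > 0$, because the form (i) building block $\lambda^0$ itself cannot be expressed as such a sum. Hence the content of the sufficiency direction really lives inside Section~6 of \cite{DG4}, and Proposition~2.2 is essentially the non-restricted readout of the classification proved there.
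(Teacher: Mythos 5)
Your proposal is correct and follows essentially the same route as the paper: Proposition 2.2 is stated there without proof, being simply the non-restricted part of the $m=2$ specialisation of \cite{DG4}, Theorem 6.4, which is exactly the source you rely on for both directions. Your supplementary computation via Remark 1.3 (forcing $\barlambda_1=1$ and $\lambda^0_1\leq p-2$, hence eliminating form (ii) of Lemma 2.1) is a sound consistency check, and your closing observation that the sufficiency genuinely lives in Section 6 of \cite{DG4} rather than in Lemma 1.9 is accurate.
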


\q We now embark on the determination of the $(2,1)$-special partitions.    In this section congruent will always mean congruent modulo $p$. Recall that partitions $\lambda$ and $\mu$ of  the same degree  have the same $p$-core if and only if for each $0\leq m<p$ the number of nodes of (the diagram of) $\lambda$ of residue $m$ is equal to the number of nodes of $\mu$ with residue $m$.  

\medskip

\begin{lemma} Let $\lambda$  be a partition with first entry  $\lambda_1\leq p-1$. Then $\lambda$ is $(2,1)$-good if and only if it has the form $(p-2)^kab+\omega_r$,  for some $k\geq 0$, $p-2>a\geq b\geq 0$ and $r\geq 0$. 
\end{lemma}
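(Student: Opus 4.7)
The plan is to prove the two directions separately.

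For the ``if'' direction, I would note that $(p-2)^k a b$ is a restricted $2$-special partition by Lemma 2.1(i), hence $(2,0)$-good by Proposition 1.4. The partition $\omega_r$ equals $L(\omega_r) = \tbw^r E$, which is a direct summand (and hence a composition factor) of $\tbw(E)$, so $\omega_r$ is $(0,1)$-special and again $(0,1)$-good by Proposition 1.4. Applying Lemma 1.9 then yields that the sum $(p-2)^k a b + \omega_r$ is $(2,1)$-good.

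For the ``only if'' direction, I would proceed by strong induction on $|\lambda|$. Since $\lambda_1 \leq p-1$ forces $\lambda$ to be restricted, Proposition 1.4 lets us use $(2,1)$-good and $(2,1)$-special interchangeably throughout. The base case $\lambda = 0$ is handled by $k=a=b=r=0$. For the inductive step, the principal tool is Lemma 1.16 applied to the node $R = (\ell, \lambda_\ell)$ at the end of the last nonzero row, where $\ell = \len(\lambda)$. This node is always suitable because $\lambda_\ell \in \{1, \ldots, p-1\}$ is not divisible by $p$, so the residue $\ell - \lambda_\ell$ of $R$ differs from the residue $\ell$ of the unique lower addable node $(\ell+1, 1)$. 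By Lemma 1.16, $\lambda_R$ is again $(2,1)$-good with $(\lambda_R)_1 \leq p-1$, and by induction $\lambda_R = (p-2)^{k'} a' b' + \omega_{r'}$.

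Lifting this back to $\lambda$ is the delicate part. Restoring the removed corner either lengthens the last row of $\lambda_R$ by one (when $\lambda_\ell \geq 2$) or appends a new row of length one (when $\lambda_\ell = 1$). In favourable configurations the new corner can be absorbed by adjusting the parameters $(k', a', b', r')$, which gives $\lambda$ of the claimed form directly. In unfavourable configurations where restoring the corner would introduce a third ``middle'' row of length in $\{2, \ldots, p-3\}$ and hence violate the form, one must instead derive a contradiction with the hypothesis that $\lambda$ is $(2,1)$-good. I would do this by combining: (i) Corollary 1.6(i)--(ii), which forces $\core(\lambda)$ to be $(2,1)$-bounded, so that $\core(\lambda)_3 \leq 1$; (ii) Proposition 1.11, which propagates $(2,1)$-goodness to the truncations $(\lambda_1, \ldots, \lambda_{n-1})$ and $(\lambda_2, \ldots, \lambda_n)$; and (iii) further iterated applications of Lemma 1.16 to exhibit a chain of suitable-node removals from $\lambda$ terminating at a $p$-core with three or more rows of length at least $2$, for instance $(2,2,2)$ when $p=5$ or the general obstruction $(p-3)^3$ for $p \geq 5$. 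Such a core fails the $(2,1)$-bounded condition, contradicting Corollary 1.6(ii) applied to the partition reached by the chain.

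The main obstacle is this case analysis in the lifting step. The form $(p-2)^k a b + \omega_r$ admits four distinct shape regimes depending on whether $r \leq k$, $r = k+1$, $r = k+2$, or $r \geq k+3$, and the interaction between these regimes under addition and removal of corner boxes is intricate. Ensuring that every $\lambda$ outside the claimed form admits an explicit reduction to an obstructing $p$-core requires detailed combinatorial bookkeeping, consistent with the authors' description of the Section~2 arguments as ``highly inductive and somewhat lengthy.''
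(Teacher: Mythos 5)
Your ``if'' direction matches the paper's. The gap is in the ``only if'' direction, and it sits exactly where you locate the difficulty. Your primary induction removes the corner $R=(\ell,\lambda_\ell)$ of the \emph{last} row; that node is indeed always suitable here, and the suitable-node lemma (Lemma 1.18 in the paper's numbering, not 1.16) does give that $\lambda_R$ has the claimed form. But this yields no contradiction in the one genuinely hard case: for $\lambda=(p-2)^kab1$ with $k\geq 1$ and $p-2>a\geq b>1$, the partition $\lambda_R=(p-2)^kab$ \emph{is} of the required form while $\lambda$ is not, so the induction is silent and everything is pushed onto your fallback. That fallback --- exhibiting a chain of suitable-node removals from $\lambda$ terminating at an obstructing $p$-core such as $(p-3)^3$ --- is asserted but not substantiated: you would need to verify suitability at every step of a long chain and show the chain can be steered to a non-$(2,1)$-bounded core, and nothing in your sketch indicates how to do either.

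The paper resolves this case differently and much more cheaply. Working with a counterexample of minimal degree, it first uses Proposition 1.11 on the truncations $\hat\lambda=(\lambda_1,\dots,\lambda_{n-1})$ and $\tilde\lambda=(\lambda_2,\dots,\lambda_n)$, together with minimality, to force the counterexample into the shape $(p-2)^kab1$ with $b>1$; it then removes a \emph{single} suitable node from the end of a row of length $p-2$ (row $1$, $2$ or $k$ according as $k=1$, $k=2$ or $k\geq 3$, with the residues of the lower addable nodes checked explicitly), obtaining $(p-2)^{k-1}(p-3)ab1$. This is smaller, still $(2,1)$-good by Lemma 1.18, and visibly not of the required form, contradicting minimality --- no $p$-core is ever reached. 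Cores enter only in the length-$3$ base case. So the missing idea is the choice of node: one must remove from the interior $(p-2)$-block rather than from the last row, and the contradiction comes from minimality of the counterexample rather than from a bounded-core obstruction.
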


\begin{proof} Certainly all these partitions are $(2,1)$-good by Lemma 1.9   since partitions of the form $(p-2)^kab$   are $(2,0)$ good and $\omega_r$ is $(0,1)$-good.

\q We claim now that a $(2,1)$-good partition $\lambda$ with $\lambda_1\leq p-1$ has the required form. Certainly the result holds for partitions of length one or two so we may assume $\len(\lambda)\geq 3$. Suppose $\len(\lambda)=3$, $\lambda=abc$.  If $\lambda$ is  a $p$-core we are done by Corollary 1.6.  If $\lambda$ is not a $p$-core then the edge length $a+2$ is at least $p$ so that $a=p-2$ or $p-1$. If $a=p-2$ then $\lambda$ has the required form. If $a=p-1$  again $\lambda=(p-2,b-1,c-1)+\omega_3$ has the required form. 

\q We suppose that the result is false and that $\lambda$ is a counterexample of minimal degree. Then the length $n$, say, of $\lambda$ is at least $4$.  We set $\hat\lambda=(\lambda_1,\ldots,\lambda_{n-1})$, $\tilde\lambda=(\lambda_2,\ldots,\lambda_n)$.  Thus we have $\hat\lambda=(p-2)^kab+\omega_r$, for some $k\geq 0$ and $p-2>a\geq b\geq 0$, $r\geq 0$, by Proposition  1.11  and the minimality assumption.  
If $k=0$ then $r=\len(\hat\lambda)=n-1$ and $\lambda_{n-1}=1$ so that $\lambda_n=1$ and $\lambda=ab+\omega_n$.  Thus we have $k>0$ and so $\lambda_1=p-1$ or $p-2$. Now $\tilde\lambda=(p-2)^mcd+\omega_s$, for some $p-2>c\geq d\geq 0$ and $m\geq0$ and $s\geq 0$. If $\lambda_1=p-1$ then we have $\lambda=(p-2)^{m+1}cd+\omega_{s+1}$, which has the required form.

\q Thus we may assume $\lambda_1=p-2$ and $\hat\lambda=(p-2)^kab$. Hence we have $\tilde\lambda=(p-2)^mcd$, with $m>0$, $p-2>c\geq d\geq 0$ or $\tilde\lambda=ef+\omega_{n-1}$, $p-2>e\geq f\geq 0$. In the first case we have $\lambda=(p-2)^{m+1}cd$, which has the required form. Thus we may assume that $\tilde\lambda=ef+\omega_{n-1}$.  This gives $\lambda_n=1$. Now if $b=0$ or $a=b=0$ then $\lambda=(p-2)^{n-2}a1$ or $\lambda=(p-2)^{n-1}1$ respectively and both have the required form. Thus we may assume $\lambda=(p-2)^kab1$, with $b>0$. If $p=3$ we must have $\lambda=\omega_n$,  which has the required form. Thus we may assume $p>3$. 

\q We now consider the case $k=1$, so $\lambda=(p-2)ab1$, $p-2> a\geq b\geq 1$. If $b=1$ then $\lambda=(p-3)(a-1)+\omega_4$ which has the right form. Hence, we may assume that $b>1$. Then, $\lambda$ has the removable node $R=(1,p-2)$ with residue $-3$  and lower addable  nodes to be found among  $(2,a+1)$, $(3,b+1)$, $(4,2)$, $(5,1)$,  with residues $a-1$, $b-2$, $-2$, $-4$.  Since $a\neq p-2$ and $b\neq p-1$ we have that $\lambda_R=(p-3)ab1$ must be $(2,1)$-good, by Lemma 1.18    and this must be  of the required form by  minimality  assumption. However, this is not the case since $b>1$ and so we have a contradiction.

\q We now consider the case $k=2$, so $\lambda=(p-2)^2ab1$, $p-2> a\geq b\geq 1$. If  $a=b=1$ then $\lambda=(p-3)^2+\omega_5$ and so it is of the right form. Thus we may assume $a>1$.  Then, $\lambda$ has the removable node $S=(2,p-2)$ with residue $-4$  and lower addable  nodes among  $(3,a+1)$, $(4,b+1)$, $(5,2)$, $(6,1)$,  with residues $a-2$, $b-3$, $-3$, $-5$.  Since $a\neq p-2$ and $b\neq p-1$ we have that $\lambda_S=(p-2)(p-3)ab1$ must be $(2,1)$-good, by Lemma 1.18  and this must be of the required t form by   minimality.  However, this is not the case, since $a>1$,  and so we have a contradiction.

\q Thus we have $k\geq3$ and $\lambda=(p-2)^kab1$. Then, $\lambda$ has the removable node $T=(k,p-2)$ with residue $-k-2$  and lower addable  nodes among $(k+1,a+1)$, $(k+2,b+1)$, $(k+3,2)$, $(k+4,1)$,  with residues $a-k$, $b-k-1$, $-k-1$, $-k-3$.  Since $a\neq p-2$ and $b\neq p-1$ we have that $\lambda_T=(p-2)^{k-1}(p-3)ab1$ must be $(2,1)$-good, by Lemma 1.18,  and this must be of the required  form by minimality.  Once again this is not the case and we have a contradiction, so we are done. 

\end{proof}

\begin{lemma} Let $\lambda$ be a restricted partition with $\lambda_1=p$. Then $\lambda$ is $(2,1)$-good if and only if it has the form $(p-1)^ka+\omega_r$ for $k\geq 1, r\geq 1$, $p-1 >  a\geq 0$.

\end{lemma}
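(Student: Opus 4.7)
\medskip

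\textbf{Proof plan.} The ``if'' direction follows immediately from Lemma~2.1(ii) and Lemma~1.9. Indeed, taking in Lemma~2.1(ii) the parameters $j=0$, first letter $0$, exponent $k-1$ and final entry $a$, one sees that $(p-1)^{k}a$ is $2$-special for any $k\geq 1$ and $0\leq a\leq p-2$; since $\omega_r$ is $(0,1)$-good, Lemma~1.9 gives that $(p-1)^{k}a+\omega_r$ is $(2,1)$-good.

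For the ``only if'' direction we argue by induction on $\deg(\lambda)$. The cases $\len(\lambda)\leq 2$ are immediate: $(p)=(p-1)+\omega_1$, $(p,a)=(p-1,a)+\omega_1$ for $0\leq a\leq p-2$, $(p,p-1)=(p-1,p-1)+\omega_1$, and $(p,p)=(p-1,p-1)+\omega_2$ all have the required shape. So suppose $\lambda$ is a counterexample of minimal degree with $n:=\len(\lambda)\geq 3$, and set $\hat\lambda=(\lambda_1,\ldots,\lambda_{n-1})$, $\tilde\lambda=(\lambda_2,\ldots,\lambda_n)$. Both are restricted, $(2,1)$-good by Proposition~1.11, and of strictly smaller degree than $\lambda$.

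We split on the value of $\lambda_2$. Suppose first $\lambda_2\leq p-1$, so that $\tilde\lambda_1\leq p-1$. By Lemma~2.3 we have $\tilde\lambda=(p-2)^{m}cd+\omega_s$ for some $m,s\geq 0$ and $p-2>c\geq d\geq 0$. Combined with $\lambda_1=p$ and the restrictedness of $\lambda$ (which forces $\lambda_2\geq 1$), an analysis by cases on $(m,s,c,d)$ shows that either $\lambda$ already has the form $(p-1)^{k}a+\omega_r$ (contradicting the choice of $\lambda$), or else $\lambda$ admits a suitable removable node $R$ in the sense of Definition~1.17(v). In the latter case Lemma~1.18 gives that $\lambda_R$ is $(2,1)$-good of smaller degree; by minimality, applying the present lemma if $(\lambda_R)_1=p$ and Lemma~2.3 otherwise, $\lambda_R$ has an explicit form, and restoring the node $R$ to $\lambda_R$ would then produce a partition of the claimed form $(p-1)^{k}a+\omega_r$, contradicting the assumption on $\lambda$.

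Next suppose $\lambda_2=p$. Then $\hat\lambda_1=\tilde\lambda_1=p$, so by the inductive hypothesis $\hat\lambda=(p-1)^{k'}a'+\omega_{r'}$ and $\tilde\lambda=(p-1)^{k''}a''+\omega_{r''}$. Matching the common entries $\lambda_2,\ldots,\lambda_{n-1}$ forces the two sets of parameters to agree in such a way that $\lambda$ itself must also be of the form $(p-1)^{k}a+\omega_r$, once again contradicting the choice of $\lambda$.

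The principal difficulty lies in the first case: for each ``bad'' configuration of $\tilde\lambda$---depending on whether $m=0,1,2$ or $m\geq 3$, and on the size of $s$ relative to $m$---one must locate a specific suitable removable node $R$ of $\lambda$ and verify its suitability by computing the residues modulo $p$ of $R$ and of all lower addable nodes of $\lambda$. The inductive use of Lemma~1.18 then proceeds exactly as in the node-removal arguments at the end of the proof of Lemma~2.3.
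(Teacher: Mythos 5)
Your \lq\lq if'' direction is correct and matches the paper, and your disposal of the case $\lambda_2=p$ works (though it is simpler than you make it: since $\tilde\lambda=(\lambda_2,\ldots,\lambda_n)$ is restricted, $(2,1)$-good and has first entry $p$, minimality gives $\tilde\lambda=(p-1)^ka+\omega_r$ with $k,r\geq 1$, and prepending $\lambda_1=p=(p-1)+1$ immediately yields $\lambda=(p-1)^{k+1}a+\omega_{r+1}$; no matching of parameters between $\hat\lambda$ and $\tilde\lambda$ is needed). The genuine gap is that the heart of the lemma --- the case $\lambda_2\leq p-1$ --- is announced rather than proved. You assert that \lq\lq an analysis by cases on $(m,s,c,d)$'' will either exhibit $\lambda$ in the required form or produce a suitable removable node whose removal leads to a contradiction, and you yourself flag this as \lq\lq the principal difficulty''; but no node is ever exhibited, no residue is computed, and it is not checked that after removal the smaller partition falls under Lemma 2.3 or the present lemma, nor that restoring the node forces the required shape. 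As written this is a plan whose feasibility is exactly what is in question, so the proof is incomplete.

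For comparison, the paper closes this case without any analysis of the shape of $\tilde\lambda$. Having shown $\lambda_2<p$ (as above) and settled $n=3$ by a $p$-core computation (for $\lambda=(p,a,b)$ with $a\geq b\geq 2$ one reduces via the suitable node $(3,b)$ to $\lambda=(p,a,2)$, which is a $p$-core with third part $2$ and hence not $(2,1)$-good by Corollary 1.6), it takes $n\geq 4$ and the single node $R=(n,\lambda_n)$: its only lower addable node is $(n+1,1)$, of residue $n$, while $R$ has residue $n-\lambda_n$, and $\lambda_n\neq p$ since $\lambda_2<p$, so $R$ is suitable. Then $\lambda_R$ is restricted, $(2,1)$-good, of smaller degree and still has first entry $p$, so minimality applied to the present lemma itself gives $\lambda_R=(p-1)^ka+\omega_r$; the constraint $\lambda_2<p$ forces $r=1$ or $k=1$, and in either case re-adding the node in row $n$ returns a partition of the required form. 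If you want to salvage your version, you should either carry out your case analysis in full or switch to this choice of node, which reduces the whole case to one residue check.
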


\begin{proof}  Certainly all partitions of this form are $(2,1)$-good, by Lemma 1.18  and Lemma 2.1.

\q We now show that any $(2,1)$-good partition $\lambda$ with $\lambda_1=p$ has the required form.  We assume for a contradiction  that this is not so and that $\lambda$ is a counterexample of minimal degree.  If $\lambda$ has length one then $\lambda$ cannot be restricted. If $\lambda$ has length $2$ then $\lambda=(p,a)$, with $0<a\leq p-1$, so that $\lambda=(p-1,a-1)+(1,1)$.  

\q Let $n=\len(\lambda)$ and define 
$\hat\lambda=(\lambda_1,\ldots,\lambda_{n-1})$ and $\tilde\lambda=(\lambda_2,\ldots,\lambda_n)$.  Note that $\lambda_2<p$ for otherwise, by Lemma 1.11,  we can write $\tilde\lambda=(p-1)^ka+\omega_r$ and  therefore $\lambda=(p-1)^{k+1}+\omega_{r+1}$. 

\q Now assume that $\len(\lambda)=3$ so that $\lambda=pab$, for $p-1\geq  a\geq b\geq 1$.  If $b=1$ then $\lambda=(p-1,a-1)+(1,1,1)$. So we have  $p-1\geq a\geq b\geq 2$. If $p=3$ then $\lambda=322$ which obviously has the required form. So we may assume that $p>3$.  Suppose $b>2$. The  node $R=(3,b)$ is  suitable. However, $\lambda_R=pa(b-1)$ does not have the required form, contrary to  Lemma 1.18 and the minimality assumption.  Hence we have $\lambda=pa2$ and it is easy to see that this is a core so that $\lambda$ is not $(2,1)$-good by Corollary 1.6. 

\q Thus we have $n\geq 4$.  The node $R=(n,\lambda_n)$ is suitable, so that we may write $\lambda_R=(p-1)^ka+\omega_r$, for some $k\geq 1$, $p-1>a\geq 0$, $1\leq r\leq n$. If $r=1$ then $\lambda_n=a+1$ and $\lambda=(p-1)^k(a+1)+\omega_1$, which has the required form. Hence $r>1$ and so $k=1$, since $\lambda_2<p$.  Thus we have $\lambda_R=(p-1)a+\omega_r$, with $r=n-1$ or $r=n$. But $r=n$ is impossible since $\lambda_{n-1}\geq \lambda_n$ so that $\lambda_R=(p-1)a+\omega_{n-1}$ and $\lambda=(p-1)a+\omega_n$, and we are done.

\end{proof}

\q We next treat the non-restricted partitions. 

\begin{lemma} Let $\lambda$ be a non-restricted partition with $\lambda_1\leq 2p-1$. Then $\lambda$ is $(2,1)$-special if and only if it has the form $(p-2)^kab+\omega_r+p\omega_s$, for some $k\geq 0$, $p-2>a\geq b\geq 0$, $r\geq0$ and $s\geq 1$.

\end{lemma}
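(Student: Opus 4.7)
The ``if'' direction is a direct consequence of Proposition 2.2 and Lemma 1.9. For $s\geq 1$ the partition $\mu=(p-2)^k ab+p\omega_s$ is a non-restricted $2$-special partition by Proposition 2.2, and $\omega_r$ is $(0,1)$-special since $L(\omega_r)$ is a composition factor of $\tbw^r E\subseteq \tbw(E)$. Hence by Lemma 1.9 the sum $\mu+\omega_r=(p-2)^k ab+\omega_r+p\omega_s$ is $(2,1)$-special.

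For the ``only if'' direction I would argue by induction on the degree of $\lambda$, following the pattern of Lemmas 2.2 and 2.3. Let $\lambda$ be a minimal counterexample. Since $\lambda$ is non-restricted with $\lambda_1\leq 2p-1$ we necessarily have $\lambda_1\in\{p,p+1,\ldots,2p-1\}$, and writing $\lambda=\lambda^0+p\bar\lambda$ in its Steinberg decomposition forces $\bar\lambda=\omega_s$ for some $s\geq 1$ (because $\bar\lambda_1\leq 1$ and $\bar\lambda\neq 0$).

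When $\lambda_1=2p-1=2(p-1)+1$, Corollary 1.12 (applied with $a=2$, $b=1$) implies $\tilde\lambda=(\lambda_2,\ldots,\lambda_n)$ is itself $(2,1)$-special, of strictly smaller degree. The inductive hypothesis handles the case that $\tilde\lambda$ is non-restricted with $\tilde\lambda_1\leq 2p-1$; Lemmas 2.1--2.3 (combined with the Steinberg tensor product decomposition) handle the case that $\tilde\lambda$ is restricted. In every subcase an explicit comparison shows that prepending $\lambda_1=2p-1$ yields a partition of the claimed form, with the indices $k$, $r$, $s$ shifted up by one as appropriate. When $p\leq\lambda_1\leq 2p-2$, I would pass instead to $\hat\lambda=(\lambda_1,\ldots,\lambda_{n-1})$ via Proposition 1.11 and determine its form by the same dichotomy. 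To incorporate the last part $\lambda_n$ and pin down $\lambda$ itself, one locates a suitable removable node $R$ of $\lambda$ in the sense of Definition 1.17 and applies Lemma 1.18 together with the inductive hypothesis on $\lambda_R$; cases in which no suitable node exists (for instance if $\lambda$ is a $p$-core) are disposed of by Corollary 1.6.

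The main obstacle will be the subcase $p\leq \lambda_1\leq 2p-2$: here one must carefully bookkeep the interaction between the restricted block $(p-2)^k ab$, the $1$-column $\omega_r$ and the $p$-column $p\omega_s$, distinguishing configurations according to whether $k$ vanishes and whether $r$ is smaller than, equal to, or greater than $s$. Verifying in each configuration that a removable node with residue distinct from all lower addable nodes can be found---so that Lemma 1.18 applies and the inductive step closes---parallels the multi-case residue analysis in Lemmas 2.2 and 2.3, and accounts for the bulk of the argument.
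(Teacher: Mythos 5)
Your ``if'' direction is correct and is exactly the paper's argument. For the ``only if'' direction, however, your plan has a circularity problem and is missing the key idea that makes the paper's induction close. First, the circularity: you propose to locate a suitable removable node $R$ and apply Lemma 1.18, but Lemma 1.18 only tells you that $\lambda_R$ is $(2,1)$-\emph{good}, while your inductive hypothesis (the statement of the lemma) concerns $(2,1)$-\emph{special} partitions. These notions are only known to coincide for restricted partitions (Proposition 1.4); for non-restricted $\lambda_R$ their equivalence is precisely Proposition 2.6, which is proved \emph{from} this lemma. So as written the inductive step does not go through; you would have to recast the whole induction in terms of $(2,1)$-good (which is salvageable, since ``form $\Rightarrow$ special $\Rightarrow$ good'' still closes the loop), but you have not done so. Relatedly, ``cases in which no suitable node exists (for instance if $\lambda$ is a $p$-core)'' is not a dichotomy: a partition can fail to be a $p$-core and still have no suitable removable node, so Corollary 1.6 does not dispose of all the leftover cases.

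Second, and more importantly, the paper does not use Lemma 1.18 in this proof at all. Its argument runs a double minimality (first on the length $n$, then on $s$) and uses the reciprocity principle (Proposition 1.10) to prove that $s\le n/2$: applying $\dagger$ replaces $p\omega_s$ by $p\omega_{n-s}$, so a counterexample with $s>n/2$ produces one with smaller $s$. This bound on $s$ is exactly what makes the final step work --- it guarantees that the length of $\hat\lambda=(\lambda_1,\dots,\lambda_{n-1})$ is achieved by the $(p-2)^kab$ or $\omega_m$ part rather than by $p\omega_s$, after which removing the first row ($\tilde\lambda$, Proposition 1.11) yields a contradiction with Lemma 2.3 or with minimality of $n$. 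Your plan has no mechanism for controlling $s$, and the residue bookkeeping you defer to (``the bulk of the argument'') is precisely where the difficulty lies when the first $s$ rows all carry an extra $p$; you have asserted, not shown, that suitable nodes exist in those configurations. Without the reciprocity step (or a worked-out substitute), the proof is incomplete.
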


\begin{proof}  
Certainly all partitions of this form are $(2,1)$-special by Lemma 1.9    since the partitions of the form $(p-2)^kab+p\omega_s$ are $2$-special, by Proposition 2.2   and $\omega_r$ is $(0,1)$-special. 

\q It is easy to see that if $\lambda$ is a  nonrestricted partition of length  at most two and $\lambda_1\leq 2p-1$ then $\lambda$ has the required form. 
Suppose for a contradiction that the lemma is false and that $\lambda=\mu+p\omega_s$ is a partition of smallest length $n$, say, for which it fails and suppose among all partitions of length $n$,  for which if fails, $s$ is as small as possible.  Thus we have  $n\geq 3$.  We now proceed in several steps.

\medskip

{\it  Step 1}  \rm\q  We have $\lambda_1<2p-1$.

\q  Suppose $\lambda_1=2p-1$.  Now $\tilde\lambda=(\lambda_2,\ldots,\lambda_n)$ is $(2,1)$-special, by  Proposition 1.11.   If $\tilde\lambda$ is restricted then $\lambda_1-\lambda_2\geq p$ so that $\lambda_2\leq p-1$ and by Lemma 2.3  we have $\tilde\lambda=(p-2)^kab+\omega_r$, for some $k\geq 0$, $p-2>a\geq b\geq 0, r\geq0$.  But then, $\lambda_1=2p-1$ gives  $\lambda=(p-2)^{k+1}ab+\omega_{r+1}+p\omega_{1}$. If $\tilde\lambda$ is non-restricted then by the minimality of the length we have that $\tilde\lambda=(p-2)^kab+\omega_r+p\omega_s$, for some $k\geq 0$, $p-2>a\geq b\geq 0, r\geq0,s\geq1$. But then, $\lambda_1=2p-1$ gives  $\lambda=(p-2)^{k+1}ab+\omega_{r+1}+p\omega_{s+1}$.

\medskip

{\it  Step 2.}   We have  $s\leq n/2$.

\q  Suppose $s>n/2$.  We apply the reciprocity  principle with respect to $n$, as in Proposition 1.10.

\q Now $\lambda^\dagger$ is $(2,1)$-special. We have 
\begin{align*}\lambda^\dagger&=(2p-1)\omega_n-(\mu_n,\ldots,\mu_1)-p(\omega_n-\omega_{n-s})\cr
&=(p-1)\omega_n-(\mu_n,\ldots,\mu_1)+p\omega_{n-s}\cr
&=(p-1-\mu_n,\ldots,p-1-\mu_1)+p\omega_{n-s}.
\end{align*}
Now $n-s<s$ so by minimality so we have 
$$\lambda^\dagger=(p-2)^kab+\omega_r+p\omega_s$$
for some $r,s\geq 0$.  So  we have 
\begin{align*}\lambda&=(\lambda^\dagger)^\dagger=(2p-1)\omega_n-(0,\ldots,0,b,a,p-2,\ldots,p-2)\cr
&\phantom{something} -(\omega_n-\omega_{n-r})-p(\omega_n-\omega_{n-s})\cr
&=(p-2)\omega_n-(0,\ldots,0,b,a,p-2,\ldots,p-2)+\omega_{n-r}+p\omega_{n-s}\cr
&=(p-2,\ldots,p-2,p-2-b,p-2-a,0,\ldots,0)+\omega_{n-r}+p\omega_{n-s}
\end{align*}
which has the required form.

\medskip

{\it   Step 3.}    We have $n\geq 4$.

\q If not, we have $n=3$ and $s=1$, by Step 2. Thus we have $\lambda=(a+p,b,c)$ with $a\geq b\geq c\geq 0$ and $a\leq p-2$, by Step 1.  We may remove a $p$-hook from the first
 row of $\lambda$ so the $p$-core of $\lambda$ is the $p$-core of $(a,b,c)$. Now the length of the edge of the diagram of $(a,b,c)$ is $a+2\leq p$. If $a=p-2$ then 
 $(p+a,b,c)=(p-2,b,c)+p\omega_1$ and $\lambda$ is no counterexample. So we may assume that $a+2<p$ so that $(a,b,c)$ is a $p$-core. Thus we have $c\leq 1$, by Corollary 1.6 (i).    If $c=0$ then $(p+a,b,c)=(a,b)+p\omega_1$ and $\lambda$ is no counterexample. If $c=1$ then $(p+a,b,c)=(a-1,b-1)+\omega_3+p\omega_1$ and $\lambda$ is no counterexample.

 \medskip

{\it Step 4.} \q\rm Conclusion.
 
 \q Consider $\hat\lambda=(\lambda_1,\ldots,\lambda_{n-1})$. Then $\hat\lambda$ is $(2,1)$-special, by Proposition  1.11,  so by minimality of length we have $\hat\lambda=(p-2)^kab+\omega_m+p\omega_s$, for some $k,m\geq 0$, $s\geq 1$ and $p-2>a\geq b\geq 0$.  By Steps 2 and 3 we have $s<n-1$ so that the length $n-1$ of $\hat\lambda$ is either $m$ or the length of $(p-2)^kab$. Moveover, $k=0$ or $m=0$ since $\lambda_1<2p-1$ (by Step 1).  If $k=0$ then $m=n-1$ and $\lambda_{n-1}=1$ which implies that $\lambda_n=1$ and hence $\lambda=ab+\omega_n+p\omega_s$. So we can assume $m=0$. Thus we have $\hat\lambda=(p-2)^kab+p\omega_s$, with $k>0$ (since $\lambda$ has length at least $4$ by Step 3).  If $b=0$ then $\lambda$ is either $(p-2)^{n-1}\lambda_n+p\omega_s$ or $(p-2)^{n-2}a\lambda_n+p\omega_s$ and $\lambda$ is no counterexample.  Thus we can assume $b>0$ and $\lambda=(p-2)^{n-3}ab\lambda_n+p\omega_s$ and 
 $$\tilde\lambda=(\lambda_2,\ldots,\lambda_n)=(p-2)^{n-4}ab\lambda_n+p\omega_{s-1}.$$
 
 But $\tilde\lambda$ is $(2,1)$-special . But this contradicts Lemma 2.3  if $s=1$ and the minimality of $n$ if $s>1$.

\end{proof}

\begin{proposition} Let $\lambda$ be a non-restricted partition with $\lambda_1\leq 2p-1$.  The following are equivalent:

(i) $\lambda$ is $(2,1)$-special;

(ii)  $\lambda$ is $(2,1)$-good;

(iii)    $\lambda$ has the form $(p-2)^kab+\omega_r+p\omega_s$, for some $k\geq 0$, $p-2>a\geq b\geq 0$ and $s\geq 1$.

\end{proposition}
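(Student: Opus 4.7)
The implication (iii)$\Rightarrow$(i) is Lemma 2.5, and (i)$\Rightarrow$(ii) is immediate since $\bar S(E)$ is a $\GL_n(K)$-quotient of $S(E)$: any composition factor of $\bar S(E)^{\otimes 2}\otimes \tbw(E)$ is a composition factor of $S(E)^{\otimes 2}\otimes \tbw(E)$. The substance of the proposition therefore lies in (ii)$\Rightarrow$(iii).

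For this implication, the plan is to apply Proposition 1.4 to produce a $(2,1)$-special $\mu\in\Lambda^+(n)$ and a $2$-good $\tau\in\Lambda^+(n)$ for which $L(\lambda)$ is a composition factor of $L(\mu)\otimes L(\tau)^F$, and then decode this using Steinberg's tensor product theorem. Writing $\lambda=\lambda^0+p\bar\lambda$ and $\mu=\mu^0+p\bar\mu$ with $\lambda^0,\mu^0$ restricted, Steinberg gives
\[
L(\mu)\otimes L(\tau)^F\;\cong\;L(\mu^0)\otimes\bigl(L(\bar\mu)\otimes L(\tau)\bigr)^F,
\]
whose composition factors are the irreducibles $L(\mu^0+p\nu)$ for $L(\nu)$ a composition factor of $L(\bar\mu)\otimes L(\tau)$. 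Uniqueness of the restricted/Frobenius decomposition forces $\lambda^0=\mu^0$ and $\bar\lambda=\nu$. The bound $\lambda_1\leq 2p-1$ with $\lambda^0_1\geq 0$ then gives $\bar\lambda_1\leq 1$, so $\bar\lambda=\omega_s$ for some $s\geq 0$, and the non-restricted hypothesis forces $s\geq 1$; consequently $\lambda^0_1=\lambda_1-p\leq p-1$.

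It remains to identify $\mu^0=\lambda^0$. If $\mu$ is itself restricted, then $\mu=\mu^0$ is a restricted $(2,1)$-special partition with $\mu_1\leq p-1$, so Lemma 2.3 (together with the fact that restricted good equals restricted special, from Proposition 1.4) gives $\mu=(p-2)^kab+\omega_r$. If $\mu$ is non-restricted, Lemma 2.5 gives $\mu=(p-2)^kab+\omega_r+p\omega_{s'}$ with $s'\geq 1$; a short case check on $r$ versus $k$ shows that $(p-2)^kab+\omega_r$ is itself always restricted (its consecutive differences never reach $p$), so by uniqueness of Steinberg's decomposition $\mu^0=(p-2)^kab+\omega_r$. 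Either way, assembling $\lambda=\lambda^0+p\bar\lambda$ yields $\lambda=(p-2)^kab+\omega_r+p\omega_s$, which is (iii). The only delicate bookkeeping is the verification that $(p-2)^kab+\omega_r$ is restricted, and no further obstacle is anticipated beyond this.
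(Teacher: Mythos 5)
Your argument is correct and follows essentially the same route as the paper's: Proposition 1.4 produces $L(\mu)\otimes L(\tau)^F$, Steinberg's theorem pins down $\lambda^0=\mu^0$ and forces $\bar\lambda=\omega_s$ from the bound $\lambda_1\leq 2p-1$, and Lemmas 2.3 and 2.5 identify $\mu^0$ according to whether $\mu$ is restricted. The only cosmetic difference is that the paper closes the cycle by proving (ii)$\Rightarrow$(i) directly rather than (ii)$\Rightarrow$(iii), and your explicit check that $(p-2)^kab+\omega_r$ is restricted is a detail the paper leaves implicit.
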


\begin{proof}  Certainly (i) implies (ii).  Moreover,  we have already seen that a partition of the form $(p-2)^kab+p\omega_s$, with $k\geq 0$, $p-2>a\geq b\geq 0$, $s\geq 1$ is $(2,0)$-special. But $\omega_r$ is $(0,1)$-special, for $r\geq 0$,  and hence $(p-2)^kab+p\omega_s+\omega_r$ is $(2,1)$-special.   Thus (iii) implies (i). We also have, by Lemma 2.5, that (i) implies (iii). 

\q So it remains to prove that (ii) implies (i). We assume that $\lambda$ is $(2,1)$-good. Then $L(\lambda)$ is a composition factor of $L(\theta)\otimes L(\tau)^F$, for some $(2,1)$-special partition $\theta$ and some $2$-good partition $\tau$, by  Proposition 1.4.  We write $\theta=\alpha+p\beta$, for partitions $\alpha$ and $\beta$ with $\alpha$ restricted. 

\q  If $\beta=0$ then $\lambda=\theta+p\tau$. Also, we have $\tau_1\leq 1$ since $\lambda_1\leq 2p-1$.  If $\tau_1=0$ then $\lambda=\theta$, which is $(2,1)$-special. If $\tau_1=1$ then $\theta_1\leq p-1$ and so $\lambda$ is $(2,1)$-special by Lemmas 2.3 and 2.5.

\q   If $\beta\neq 0$ then $\theta$ has the form $(p-2)^kab+\omega_r+p\omega_s$, for some $k\geq 0$, $p-2>a\geq b\geq 0, r\geq0,s\geq1$ by  Lemma 2.5.   Moreover, $L(\lambda)$ is isomorphic to $L(\alpha)\otimes L(\gamma)^F$ for some partition $\gamma$ such that  $L(\gamma)$ is a composition factor of $L(\beta)\otimes L(\tau)$.  Since $\lambda_1\leq 2p-1$ we must have $\gamma_1\leq 1$ so that $\gamma=\omega_t$ for some $t\geq 1$ and we are done.

\end{proof}

 \q We continue with the analysis of restricted $(2,1)$-good partitions.

\begin{lemma}   Let $\lambda$ be a restricted partition with $p<\lambda_1< 2p-2$. Then $\lambda$ is $(2,1)$-good if and only if it has the form $(p-1)^ka+(b)+\omega_r$, for some $k\geq 1$, $p-1>a\geq 0$, $p-1>b>  0$, $r\geq 0$, with $r>0$ if $b=1$.

\end{lemma}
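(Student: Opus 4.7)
The plan mirrors the structure of Lemmas 2.3 and 2.4. For the sufficient direction, I observe that $(p-1)^k a$ is $1$-special (as recalled in Lemma 2.1(ii)) and $(b)$ with $0<b<p-1$ is $1$-special, so Lemma 1.9 gives that $(p-1)^k a + (b)$ is $2$-special; since $\omega_r$ is $(0,1)$-special, a further application of Lemma 1.9 gives that $(p-1)^k a + (b) + \omega_r$ is $(2,1)$-special, hence $(2,1)$-good. The numerical side conditions ($p-1>a\geq 0$, $p-1>b>0$, and $r>0$ when $b=1$) are precisely those required to ensure that the first entry actually falls in the range $p<\lambda_1<2p-2$.

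For necessity, I would argue by minimal-degree counterexample. Lengths $n:=\len(\lambda)\leq 2$ are handled by direct inspection of the small list of candidates, so assume $n\geq 3$. By Proposition 1.11 both $\hat\lambda=(\lambda_1,\ldots,\lambda_{n-1})$ and $\tilde\lambda=(\lambda_2,\ldots,\lambda_n)$ are restricted and $(2,1)$-good. The first entry of $\hat\lambda$ still satisfies $p<\hat\lambda_1<2p-2$, so by minimality $\hat\lambda=(p-1)^{k'}a'+(b')+\omega_{r'}$ for admissible parameters. The structure of $\tilde\lambda$ is then dictated by $\lambda_2$: Lemma 2.3 applies when $\lambda_2\leq p-1$, Lemma 2.4 when $\lambda_2=p$, and the induction hypothesis when $p<\lambda_2<2p-2$. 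Matching the common entries $\lambda_2,\ldots,\lambda_{n-1}$ in the two presentations will force $\lambda$ into the claimed form in most subcases.

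The residual awkward subcases, where the matching leaves genuine ambiguity (typically when $\tilde\lambda$ sits in the Lemma 2.3 regime with few $(p-2)$'s, or when the $\omega_r$ tail and the $(b)$ adjustment interact at the bottom of the diagram), would be settled by the node-removal technique of Lemma 1.18, exactly as in the $k=1$, $k=2$, $k\geq 3$ endgame of Lemma 2.3: identify a removable node $R$ of $\lambda$ whose residue is different from those of all lower addable nodes, invoke Lemma 1.18 to get that $\lambda_R$ is $(2,1)$-good of smaller degree, apply minimality to conclude $\lambda_R$ has the stated form, and then either read off the form of $\lambda$ or derive a contradiction (for instance via Corollary 1.6(ii) when $\lambda$ turns out to be a $p$-core). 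The main obstacle I anticipate is the combinatorial bookkeeping: the extra parameter $b$ shifts the residues of the nodes affected by the addition of $(b)$ and $\omega_r$, so verifying that a candidate removable node is suitable must be done separately for each parameter regime, and the small-$k'$ cases ($k'=1$ and $k'=2$) are likely to require their own explicit node-removal verifications, as in Lemma 2.3.
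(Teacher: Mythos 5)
Your sufficiency argument is correct and is essentially the paper's (the paper cites Lemma 1.9 together with Lemma 2.1 in exactly this way), though your side remark that the parameter constraints "precisely" pin $\lambda_1$ into the range $p<\lambda_1<2p-2$ is not quite right: for $b=p-2$ and $r\geq 1$ the form gives $\lambda_1=2p-2$, which is simply outside the scope of this lemma (those partitions are classified in Lemma 2.8). For necessity your toolkit is the right one, but the organization differs from the paper's: for length at least $4$ the paper does not match the two truncations $\hat\lambda$ and $\tilde\lambda$ against each other, but instead splits into four cases governed by the bottom of the diagram ($\lambda_n=1$ versus $\lambda_n>1$, and whether $\lambda_{n-1}=p$, respectively whether removing the node $(n,\lambda_n)$ leaves a restricted partition), and in each case removes a suitable node near the bottom and compares $\lambda_R$ with the inductive form. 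Your matching strategy could probably be made to work there with comparable effort.

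The genuine gap is the length-$3$ base case, which your plan cannot close as described. For $n=3$ both truncations have length $2$, and length-$2$ partitions are essentially unconstrained by Lemmas 2.3 and 2.4 and by the inductive hypothesis, so "matching the common entries" yields no information; moreover the problematic partitions here, such as $\lambda=(p+b,c,d)$ with $2\leq d\leq c\leq p-2$ and $1\leq b<c$, are usually \emph{not} $p$-cores (a $p$-hook can be removed from the first row or wrapping around several rows), so Corollary 1.6(ii) "when $\lambda$ turns out to be a $p$-core" does not apply. The paper instead explicitly removes the possible $p$-hooks, computes that $\core(\lambda)$ equals $(c-1,b+1,d)$ or $(c-1,d-1,b+2)$, and invokes Corollary 1.6(i) to see that a $(2,1)$-good partition must have $(2,1)$-bounded core, which fails because the third part is at least $2$. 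For the remaining length-$3$ shape $\lambda=(p+b,p+c,d)$ with $c>0$ the paper uses the reciprocity principle (Proposition 1.10) to pass to $\lambda^\dagger=(2p-1-d,p-1-c,p-1-b)$ and reduce to Lemma 2.4 or to the previous subcase; reciprocity does not appear anywhere in your plan, and neither node removal nor core computations substitute for it there. Without these two ingredients the induction has no valid base.
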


\begin{proof} Certainly all partitions of the  given form are $(2,1)$-good, by Lemma 1.9  and Lemma 2.1.

\q   There are no partitions of length one satisfying the hypotheses. If  $\lambda$ has length $2$  then $\lambda=(p+b,c)$, with $0<b<c\leq p-1$ and we may write $\lambda=(p-1,c)+(b)+\omega_1$. 
 
\q  We now consider the case in which $\lambda$ has length $3$.   Then $\lambda=(p+b)cd$ with $1\leq b<c\leq p-1$ and $1\leq d\leq c\leq p-1$ or $\lambda=(p+b)(p+c)d$ with $0\leq c\leq b<p-2$, $b\neq 0$  and $0\leq c<d\leq p-1$. 

\q 
Assume first that $\lambda=(p+b)cd$. If $d=1$ then $\lambda=(p-1)(c-1)+(b)+\omega_3$ and so  has the right form. If $c=p-1$ then $\lambda=(p-1)^2d+(b)+\omega_1$ and  again has the  right form.    Hence we may assume that  $2\leq d\leq c\leq p-2$ and $1\leq b\leq c\leq p-2$. If $\lambda$ is a $p$-core then since $\lambda_3=d\geq2$ we get that $\lambda$ is not a $(2,1)$-special partition, by Corollary 1.6. Therefore, we may assume that $\lambda$ is not a $p$-core. Hence we can remove at least one $p$-hook from the diagram of $\lambda$. Since $c\leq p-2$ and $b<c$ this $p$-hook must involve boxes from the first two rows or boxes from all the three rows of $\lambda$. If we can remove a $p$-hook involving boxes only from the first two rows of $\lambda$ then the resulting  partition is  $(c-1,b+1,d)$ and since $c\leq p-2$ this is a core, so that $\core(\lambda)=(c-1,b+1,d)$.  Again, since $d\geq2$ we get that $\lambda$ is not a $(2,1)$-special partition, by Corollary 1.6.  If the $p$-hook that we remove involves boxes from all the rows of $\lambda$ then the resulting partition has the form $(c-1,d-1,b+2)$ and since $c\leq p-2$ this is a core, so that $\core(\lambda)=(c-1,d-1,b+2)$. But this has third part at least $2$ and so is not $(2,1)$-good, by Corollary 1.6. 

\q  We now consider  the situation in which  $\lambda=(p+b)(p+c)d$ with $0\leq c<b<p-2$ and $0\leq c<d\leq p-1$. If $c=0$ then $\lambda=(p-1)^2d+(b)+\omega_2$ and so has the right form. Hence, we may assume that $c>0$ (so $d>1$) and we prove that $\lambda$ is not $(2,1)$-special. Assume for a contradiction that it is. Then by the reciprocity principle, Proposition 1.10,   we have that $\lambda^\dagger=(2p-1)\omega_3-w_0\lambda=(2p-1-d)(p-1-c)(p-1-b)$ is $(2,1)$-special. If $d=p-1$ then $\lambda^\dagger=(p,p-1-c,p-1-b)$ with $1<p-1-b\leq p-1-c<p-1$ and so we get a contradiction by Lemma 2.4.   Thus, we may assume that $1<d<p-1$ and so $\lambda^\dagger$ is a restricted partition of the form $(p+e)fg$ with $1\leq e<p-2$. Hence, by the previous paragraph,  it can be written in the form $(p-1)^ka+(h)+\omega_r$ for $k\geq1, h\geq1,r\geq1, 0\leq a<p-1.$ Since $p-1-c<p-1$ we have that the length of $\lambda$ is achieved by $\omega_r$ and so $r=3$. In particular we have  $p-1-b=1$ and so $b=p-2$ which contradicts the assumption on $b$. Therefore, $\lambda$ is not $(2,1)$-special and we are done.

\q  Now assume, for a contradiction, that the result  is false, and let $\lambda$ be a counterexample of minimal degree. Note that $\lambda$ has length at least $4$ by the above.  We set $\hat\lambda=(\lambda_1,\ldots,\lambda_{n-1})$ and $\tilde\lambda=(\lambda_2,\ldots,\lambda_n)$.  We divide the rest of the proof up into four cases.

\medskip

\newpage

{\it  Case 1.}  $\lambda_n=1$ and $\lambda_{n-1}\neq p$.

 \q We have $\hat\lambda=(p-1)^ka+(b)+\omega_r$ for some $k\geq 1$, $0\leq a\leq p-2$, $0<b\leq  p-2$ and $r\geq 0$.  If $r=n-1$ then $\lambda=(p-1)^ka+(b)+\omega_{n}$ has the required form, so we assume $r<n-1$. Hence the length $n-1$ of $\hat\lambda$ is achieved from the partition $(p-1)^ka$ so that $k=n-2$, $0<a<p-1$ or $k=n-1$.

  \q Suppose that $k=n-2$ so that $\lambda=(p-1)^{n-2}a1+(b)+\omega_r$. If $r=n-2$ then $\lambda=(p-1)^{n-2}(a-1)+(b)+\omega_n$, which has the required form. If $r=n-3$ and $a=1$ then $\lambda=(p-1)^{n-3}(p-2)+(b)+\omega_n$, which again has the required form. So we may assume $r<n-2$ and it is not the case that $r=n-3$ and $a=1$. 
Now $\lambda=(p-1)^{n-2}a1+(b)+\omega_r$ has the removable node $R=(n-2,p-1)$ with  residue $-n+1$ and lower addable nodes $(n-1,a+1)$, $(n,2)$, $(n+1,1)$ with residues $-n+a+2$, $-n+2$, $-n$. Since $a\neq p-1$ we get, from Lemma 1.18, that $\lambda_R=(p-1)^{n-3}(p-2)a1+(b)+\omega_r$ has the required form. But this happens only in the case $a=1$, $r=n-3$, which we have already dealt with. 

\q Suppose $k=n-1$. Then we have $\hat\lambda=(p-1)^{n-1}+(b)+\omega_r$ and $r<n-1$. Then $\lambda=(p-1)^{n-1}1+(b)+\omega_r$ has the required form.

\medskip

{\it  Case 2.}    $\lambda_n=1$ and $\lambda_{n-1}=p$.

  \q   Then $\lambda$ has the removable node $S=(n-1,p)$, which has residue $-n+1$ and lower addable nodes $(n,2)$ and $(n+1,1)$, with residues $-n+2$ and $-n$. From Lemma 1.18  we get that $\lambda_S$ is $(2,1)$-good and so we may write
   $\lambda_S=(p-1)^ka+(b)+\omega_r$, with $k\geq 1$ 
  with  $p-1>a\geq 0$, $p-1>b>0$, $r\geq 0$. Since $\lambda_{n-1}=p$ we must have $k\geq n-2$. 

\q Suppose $k=n-2$. Then $a=p-2$, $r=n$ so that 
$$\lambda_S=(p-1)^{n-2}(p-2)+(b)+\omega_n$$
 and $\lambda=(p-1)^{n-1}+(b)+\omega_n$ has the required form.

\q Suppose $k=n-1$. Then $\lambda_S=(p-1)^{n-1}a+(b)+\omega_r$.  We note that $r<n-1$ since $\lambda_{n-1}=p$.  Hence $a=\lambda_n=1$, $\lambda_S=(p-1)^{n-1}1+(b)+\omega_r$. If $r=n-2$ we get $\lambda=(p-1)^{n-1}+(b)+\omega_n$, which is of the required form. However, we can not have $r<n-2$ since then $\lambda_S=(p-1)^{n-1}1+(b)+\omega_r$ would give $\lambda_{n-2}=p-1$, $\lambda_{n-1}=p$.

\medskip

{\it  Case 3.}     $\lambda_n>1$ and, for $T=(n,\lambda_n)$,  the partition $\lambda_T$ is restricted. 

\q In this case we have $\lambda_T=(p-1)^ka+(b)+\omega_r$, for some $k\geq 1$, $p-1>a\geq 0$, $0<b<p-1$ and $r\geq 0$. Note that we have $\lambda_{n-1}\geq 2$ and hence $k\geq n-2$. 

\q Suppose $k=n-2$. Then we have $r=n$ and  $\lambda=(p-1)^{n-2}a1+(b)+\omega_n$  If $a=p-2$ we can write $\lambda=(p-1)^{n-1}2+(b)+\omega_{n-2}$, in the required form.  So we may assume $a<p-2$. Now $\lambda$ has the removable node $U=(n-2,p)$ with residue $-n+2$ and lower addable nodes $(n-1,a+2)$, $(n,3)$ and $(n+1,1)$ with residues $-n+a+3$, $-n+3$ and $-n$. If follows from Lemma 1.18  that $\lambda_U=(p-1)^{n-3}(p-2)a1+(b)+\omega_n$ has the required form. But this is so only if $a=p-2$, a case we have already excluded. So we have a contradiction.

\q Suppose now that $k=n-1$ so that $\lambda_T=(p-1)^{n-1}a+(b)+\omega_r$. Then we have $\lambda=(p-1)^{n-1}(a+1)+(b)+\omega_r$, which has the required form.

\q Suppose finally  that $k=n$. Then $\lambda_T$ has the form $(p-1)^n+(b)+\omega_r$. But then $\lambda_n$ is at least $p$ and this is not possible since $\lambda$ is restricted.

\medskip

{\it Case  4.}    $\lambda_n>1$ and for $T=(n,\lambda_n)$ the partition $\lambda_T$ is non-restricted.

\q We must have that $\lambda_{n-1}=\lambda_n+p-1$. Then $\lambda$ has the removable node  $V=(n-1,\lambda_{n-1})$ with residue $-n+\lambda_n$ and lower addable nodes $(n,\lambda_n+1)$ and $(n+1,1)$ with residues $-n+\lambda_n+1$ and $-n$. From Lemma 1.18  we get that $\lambda_V$ is expressible in the form $(p-1)^ka+(b)+\omega_r$.  But then we get $\lambda_{n-2}\leq p$ and $\lambda_{n-1}=\lambda_n+p-1>p$, a contradiction.

\q This completes the proof.

\end{proof}

\begin{lemma} Let $\lambda$ be a restricted $(2,1)$-good partition with $\lambda_1=2p-2$. Then $\lambda$ has the form $(p-1)^ka+(p-1)^mb$, with $k,m\geq 1$, $p-1>a\geq 0$, $p-1>b\geq 0$, or the form $(p-1)^ka+(p-2)+\omega_r$, with $k\geq 1$ and $p-1>a\geq 0$, $r\geq 1$.

\end{lemma}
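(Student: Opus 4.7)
The plan is to mirror the proof strategy of Lemma 2.7, establishing the claim by induction on the degree of $\lambda$, with the inductive step driven by suitable node removal (Lemma 1.18). (For completeness I first note sufficiency, although the statement asks only for the necessary direction: each partition of form (a) or (b) is $(2,1)$-good by iterated application of Lemma 1.9, since the summands $(p-1)^ka$, $(p-1)^mb$ and $(p-2)$ are each $1$-special and $\omega_r$ is $(0,1)$-special.)

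For the necessity, I proceed by induction on the degree of $\lambda$. The length-$1$ and length-$2$ cases are immediate: the restriction condition forces $\lambda=(2p-2)$ and $\lambda=(2p-2,p-1)$ respectively, each of which decomposes explicitly into form (a) or (b). The length-$3$ case requires splitting $\lambda=(2p-2,c,d)$ into subcases according to $c\in[p-1,2p-2]$ and the admissible range of $d$; in each configuration I either exhibit a decomposition or rule the case out using Corollary 1.6 ($p$-core analysis) and/or the reciprocity principle (Proposition 1.10).

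For the inductive step, let $\lambda$ be a counterexample of minimal degree with $\len(\lambda)=n\geq 4$, and set $\tilde\lambda=(\lambda_2,\ldots,\lambda_n)$. By Proposition 1.11, $\tilde\lambda$ is restricted and $(2,1)$-good, with $\tilde\lambda_1=\lambda_2\in\{p-1,p,\ldots,2p-2\}$. According to the value of $\tilde\lambda_1$, the shape of $\tilde\lambda$ is described by Lemma 2.3, Lemma 2.4, Lemma 2.7, or, in the case $\tilde\lambda_1=2p-2$, by the inductive hypothesis of the present lemma. Following the four-case pattern of the proof of Lemma 2.7, I split further according to whether $\lambda_n=1$ or $\lambda_n>1$ and the value of $\lambda_{n-1}$ (or whether $\lambda_T$ is restricted, for $T=(n,\lambda_n)$). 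In each subcase I either verify directly that $\lambda$ has form (a) or (b) (contradicting that $\lambda$ is a counterexample), or I identify a suitable removable node $R$ in $\lambda$ so that $\lambda_R$ is $(2,1)$-good of strictly smaller degree; by minimality $\lambda_R$ then has the claimed form, from which I either reconstruct $\lambda$ in the required form or derive a contradiction from the residue condition built into Lemma 1.18.

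The main obstacle, as in Lemma 2.7, is the sheer volume of the case analysis. Forms (a) and (b) overlap in many ways, and $\lambda_1=2p-2$ admits several essentially different parameter realizations, so careful bookkeeping is required when reconstructing $\lambda$ from $\lambda_R$ or from $\tilde\lambda$. Locating a suitable removable node is especially delicate in the presence of long runs of $(p-1)$ or $(2p-2)$, because the residues of candidate removable nodes frequently collide with those of lower addable nodes, and several rows may have to be examined before a suitable node can be pinned down.
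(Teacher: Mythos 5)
Your toolkit is the right one (minimal counterexample by degree, Proposition 1.11, Lemma 1.18, Corollary 1.6, Proposition 1.10), and the low-length base cases are handled essentially as in the paper (note only that the length-one case is vacuous: a restricted partition of length one has first part at most $p-1<2p-2$, so there is nothing to decompose). But the proposal misses the one structural idea that makes this lemma tractable, and it defers the entire case analysis, which for a statement of this kind \emph{is} the proof. The paper's decisive move is to apply the reciprocity principle globally to every $\lambda$ with $\lambda_n>1$: since $\lambda_1=2p-2$ and $\lambda$ is restricted, $\lambda^\dagger=(2p-1)\omega_n-w_0\lambda$ is again restricted with first entry $2p-1-\lambda_n$ lying strictly between $p-1$ and $2p-2$, so Lemmas 2.4 and 2.7 classify $\lambda^\dagger$ outright and dualizing back immediately puts $\lambda$ in the required form. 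This disposes of the whole regime $\lambda_n\geq 2$ with no induction and no node removal, leaving only $\lambda_n=1$ to be fought by hand. Your plan confines reciprocity to the length-three case and instead proposes to classify $\tilde\lambda=(\lambda_2,\ldots,\lambda_n)$ and prepend $2p-2$; since Proposition 1.11 only passes goodness \emph{down} to $\tilde\lambda$, knowing the shape of $\tilde\lambda$ does not decide whether $(2p-2,\tilde\lambda)$ is good, so every configuration in which the prepension fails to have form (a) or (b) must be separately excluded by node removal or core arguments. Without the reciprocity reduction that list is substantially longer than the one the paper actually faces.

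A second divergence: in the remaining case $\lambda_n=1$ the paper inducts on $\hat\lambda=(\lambda_1,\ldots,\lambda_{n-1})$, which still has first entry $2p-2$ and strictly smaller degree, so the lemma's own inductive hypothesis applies directly when $\hat\lambda$ is restricted (and when it is not, $\lambda_{n-1}=p$ is forced and a single suitable node $(n-1,p)$ resolves the case). Your organization around $\tilde\lambda$ is not unworkable, but you have not verified that in each of the resulting subcases a suitable removable node actually exists --- and this is precisely where such arguments fail: the lemma's target forms include partitions like $(p-1)^{n-1}1+(p-1)^{n-2}1$ with long runs of equal parts and few removable nodes, and the residue clashes you acknowledge in your last paragraph are not a bookkeeping nuisance but the point at which the argument either closes or does not. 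As written, the proposal is a credible strategy outline rather than a proof.
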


\begin{proof} As usual we remark that by earlier results, Lemma 1.9  and Lemma 2.1,  all  partitions  of the given form are $(2,1)$-good.

\q We now suppose that $\lambda$ is $(2,1)$-good  restricted partition with $\lambda_1=2p-2$ and we show that $\lambda$ has the form. This is vacuously true for partitions of length one. Moreover, if $\lambda$ has length $2$  then $\lambda=(2p-2,p-1)=(p-1,p-1)+(p-1)$ and so  is of the right form. So, we now  assume $\lambda$ has length at least $3$.  We write $n$ for the length of $\lambda$.  

\q Assume first that $\lambda_n>1$. Then we consider the partition $\lambda^\dagger$. Since $\lambda_1=2p-2$ we have that  $\lambda^\dagger$  is restricted. Moreover $\lambda^\dagger$  has first entry less than $2p-2$ and  $\lambda^\dagger$ is $(2,1)$-special by Proposition 1.10.  The first entry of $\lambda^\dagger$ is $2p-1-\lambda_n$ and this is at least $p$ as $\lambda_n<p$ (as $\lambda$ is restricted).

\q Then, by Lemmas 2.4 and 2.7  we have $\lambda^\dagger=(p-1)^ka+(b)+\omega_r$, with $k\geq 1$, $p-1>a\geq 0$, $b,r\geq 0$ and either $b$ or $r$ nonzero. Since $\lambda=(\lambda^\dagger)^\dagger$ we have 
\begin{align*}\lambda&=(2p-1)\omega_n-(0,0,\ldots,a,p-1,\ldots,p-1)-(0,0,\ldots,b)-(\omega_n-\omega_{n-r})\cr
&=(p-1,\ldots,p-1-a,0,\ldots,0)+(p-1,\ldots,,p-1,p-1-b)+\omega_{n-r}
\end{align*}

and, since $\lambda_1=2p-2$, we easily conclude that $\lambda$   has the required form. 

\q  Now suppose that the result  is false and that $\lambda$ is a counterexample of minimal degree. We know that the length $n$ of $\lambda$ is at least $3$ and $\lambda_n=1$.  

\q Suppose $n=3$. Then we have either
$$\lambda=(2p-2,p,1)=(p-1)^2+(p-2)+\omega_3$$
or
$$\lambda=(2p-2,p-1,1)=(p-1)^21+(p-1)$$
and $\lambda$ has the required form.  

\q Now suppose the length of $n=4$.  Thus we may write $\lambda=(2p-2,p-1+a,b,1)$, with $0\leq a\leq p-1$ and $p\geq b\geq 1$. If $a=p-1$ then $b=p$ or $p-1$ (since $\lambda$ is restricted) and then
$$\lambda=(2p-2,2p-2,p,1)=(p-1)^31+(p-1)^21$$
or
$$\lambda=(2p-2,2p-2,p-1,1)=(p-1)^31+(p-1)^2$$
and so has the required form.  So we have $p-1>a$. So $\lambda$ has the removable node $R=(1,2p-2)$ which has residue $-3$ and lower addable nodes $(2,p+a)$, $(3,b+1)$, $(4,2)$, $(5,1)$ with residues $a-2$, $b-2$, $-2$, $-4$.  Either we may apply Lemma 1.18 or $b=p-1$. In the latter case we have
$$\lambda=(2p-2,p-1+a,p-1,1)=(p-1)^31+(p-1)a$$
which has the required form. So we map apply Lemma 1.18 to get that 
$$\lambda_R=(2p-3,p-1+a,b,1)$$
is $(2,1)$-good. By Lemma 2.4 for $p=3$ and Lemma 2.7 for the general case we get that this is only possible if $a=0,b=1$ or $a=1,1\leq b\leq p$. For the first case we get that $\lambda_R=(2p-3,p-1,1,1)$ and so 
$$\lambda=(2p-2,p-1,1,1)=(p-1,p-2)+(p-2)+\omega_4$$
 and so it has the right form. In the  second case we have that  $\lambda_R=(2p-3,p,b,1)$  and so 
 $$\lambda=(2p-2,p,b,1)=(p-1,p-1,b-1)+(p-2)+\omega_4$$
  and again  has  the required  form.  Thus we have $n\geq 5$.   We divide the remainder of the proof into two  cases. We set $\hat\lambda=(\lambda_1,\ldots,\lambda_{n-1})$. 

\medskip

{\it Case 1.}    $\hat\lambda$ is restricted.

\q There are two possibilities:  (i) $\hat\lambda=(p-1)^ka+(p-1)^lb$, for some $k,l\geq 1$, $p-1>a,b\geq 0$; and (ii) $\hat\lambda=(p-1)^mc+(p-2)+\omega_r$, for some $m\geq 1$, $p-1>c\geq 0$, $r\geq 1$. We consider these separately.

\q Suppose we are in the situation (i). We may assume $k\geq l$. If $k, l\geq 3$ then we must have $\tilde\lambda=(p-1)^ud+(p-1)^ve$ for some $u,v\geq 1$ and then $\lambda=(p-1)^{u+1}d+(p-1)^{v+1}e$ has the required form. Suppose now that $k\geq 3$, $l=1$. Then $k=n-2$ or $n-1$. If $k=n-1$ then $\lambda=(p-1)^{n-1}1+(p-1)b$, which has the required form. So we assume $k=n-2$ so that $\lambda=(p-1)^{n-2}a1+(p-1)b$ with  $a=\lambda_{n-1}$. We first suppose that $n=5$. Then $\lambda=(p-1)^3a1+(p-1)b$. If $a=b=1$ then 
$$\lambda=(2p-2,p,p-1,1,1)=(p-1)^2(p-2)+(p-2)+\omega_5$$  and  has the required form. Hence we may assume that $a,b$ are not simultaneously equal to 1. Then,  $\lambda$ has the removable node $R=(3,p-1)$, with residue $-4$ and lower addable nodes $(4,a+1)$, $(5,2)$, $(6,1)$, with residues $a-3$, $-3$, $-5$. Thus we may apply Lemma 1.18 to deduce that $\lambda_R=(p-1)^2(p-2)a1+(p-1)b$ is $(2,1)$-good and so by minimality of the degree of $\lambda$ it must have the required form. This happens only if $a=b=1$ and  we have already excluded  this case. We now suppose that $n>5$. Thus, $\lambda$ has the removable node $S=(n-2,p-1)$, with residue $-n+1$ and lower addable nodes $(n-1,a+1)$, $(n,2)$, $(n+1,1)$, with residues $-n+a+2$, $-n+2$, $-n$. Thus we may apply Lemma 1.18 to deduce that $\lambda_S=(p-1)^{n-3}(p-2)a1+(p-1)b$ is $(2,1)$-good. But it does not have the required form, contradicting the minimality of the degree of $\lambda$.

\q This leaves the possibility  $l=2$,  which we consider now.  Thus we have $\hat\lambda=(p-1)^{n-2}a+(p-1)^2b$ (again as above we may exclude the case $k=n-1$ for  then $\lambda$ has the required form). We first suppose $n=5$. So, $\lambda=(p-1)^3a1+(p-1)^2b$. If $p=3$ then $\lambda=(4,4,3,1,1)$ or $\lambda=(4,4,2,1,1)$. For the first case we have that $\core(\lambda)=(4,2,2,1,1)$ and for the second that $\core(\lambda)=(3,2,2,1,1)$ and so in either case $\lambda$ is not $(2,1)$-special. Thus, we may assume that $p>3$. We have the removable node $T=(2,2p-2)$ with residue $-4$ and lower addable nodes $(3,p+b)$, $(4,a+1)$, $(5,2)$, $(6,1)$ with residues  $-3+b$, $a-3$, $-3$, $-5$. Applying Lemma 1.18 we get that $\lambda_T=(2p-2,2p-3,p-1+b,a,1)$ is $(2,1)$-special. This does not have the required form  and we have a contradiction to the minimality of $\lambda$. We now suppose $n>5$.  We write $\lambda=(p-1)^{n-2}a1+(p-1)^2b$. Then $\lambda$ has the removable node $U=(n-2,p-1)$ with residue $-n+1$ and lower addable nodes $(n-1,a+1)$, $(n,2)$, $(n+1,1)$ with residues $-n+a+2$, $-n+2$, $-n$.  By Lemma 1.18 , $\lambda_U=(p-1)^{n-3}(p-2)a1+(p-1)^2b$ is $(2,1)$-special and so must have the required form. This happens only if $r=n-3$ and $c=1$ but then $\lambda=(p-1)^n-3(p-2)+(p-2)+\omega_n$ and  has also the required form.  So we are done.

\q We now move on to possibility  (ii).  So $\hat\lambda=(p-1)^mc+(p-2)+\omega_r$.  If $r=n-1$ then $\lambda=(p-1)^mc+(p-2)+\omega_n$ has the required form.  Suppose now that $r=n-2$ then  $\hat\lambda=(p-1)^{n-2}c+(p-2)+\omega_{n-2}$ and we must have $c=\lambda_{n-1}>0$. Hence we have $\lambda=(p-1)^{n-2}(c-1)+(p-2)+\omega_n$ and $\lambda$ has the required form. So we have  $\lambda=(p-1)^{n-2}c1+(p-2)+\omega_r$, with $r<n-2$. Thus $\lambda$ has the removable node $V=(n-2,p-1)$ with residue $-n+1$ and addable nodes $(n-1,c+1)$, $(n,2)$, $(n+1,1)$ with residues $-n+c+2$, $-n+2$, $-n$. Applying Lemma 1.18 we get that $\lambda_V=(p-1)^{n-3}(p-2)c1+(p-2)+\omega_r$ is $(2,1)$-special. But this is not of the required form,  contradicting  the minimality of $\lambda$.

\medskip

{\it  Case 2.}  $\hat\lambda$ is not restricted.

\q We must have $\lambda_{n-1}=p$. Thus $\lambda$ has the removable node $W=(n-1,p)$ with residue $-n+1$ and lower addable nodes $(n,2)$, $(n+1,1)$ with residues $-n+2$, $-n$. Applying Lemma 1.18 we get that $\lambda_W$ is $(2,1)$-good. There are two possibilities:  $\lambda_W=(p-1)^ka+(p-1)^lb$, with $k\geq l \geq 1$, $p-1>a,b\geq 0$;  $\lambda_W=(p-1)^mc+(p-2)+\omega_r$, with $m\geq 1$, $r\geq 1$. We consider these cases separately. 

\q In the first case,  from $\lambda_{n-1}=p$,  it follows that $\lambda_W=(p-1)^{n-1}1+(p-1)^{n-2}$ and that $\lambda=(p-1)^{n-1}1+(p-1)^{n-2}1$, which has the required form.

\q In the second case  we have $\lambda_W=(p-1)^mc+(p-2)+\omega_r$.  Since $\lambda_{n-2}\geq\lambda_{n-1}=p$ we must have $m,r\geq n-2$. Also, $m\neq n$, since $\lambda_n=1$.   If $m=n-2$ then $\lambda=(p-1)^{n-2}(p-2)+(p-2)+\omega_n$, which has the required form.   If $m=n-1$ then $r=n-2$ (since $\lambda_{n-1}=p$) so that $\lambda_W=(p-1)^{n-1}c+\omega_{n-2}$ and $\lambda=(p-1)^{n-1}c+\omega_{n-1}$, which has the required form.

\end{proof}

\begin{lemma} Let $\lambda$ be a restricted $(2,1)$-good partition with $\lambda_1=2p-1$. Then $\lambda$ has the form $(p-1)^ka+(p-1)^mb+\omega_r$, with $k,m\geq 1$, $p-1>a\geq 0$, $p-1>b\geq 0$.

\end{lemma}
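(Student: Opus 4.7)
My plan is to handle the two directions separately. Sufficiency is immediate: by the characterization of $1$-special partitions recalled before Lemma~2.1, both $(p-1)^ka$ and $(p-1)^mb$ are $1$-special; two applications of Lemma~1.9, combined with the fact that $\omega_r$ is $(0,1)$-special, show that $(p-1)^ka+(p-1)^mb+\omega_r$ is $(2,1)$-special, and hence $(2,1)$-good.

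The converse is the main content. The key observation is that $2p-1=2(p-1)+1$, so the hypothesis $\lambda_1=2p-1$ is precisely the one under which Corollary~1.12 applies with $(a,b)=(2,1)$. Let $n=\len(\lambda)$; restrictedness forces $\lambda_n\leq p-1<2p-1$, so the smallest index $j$ with $\lambda_j<2p-1$ satisfies $2\leq j\leq n$. Since $\lambda$ is restricted, Proposition~1.4 identifies $(2,1)$-good with $(2,1)$-special, and $j-1$ successive applications of Corollary~1.12 show that $\mu:=(\lambda_j,\ldots,\lambda_n)$ is $(2,1)$-special. Restrictedness of $\lambda$ at position $j-1$ forces $\mu_1\geq p$, while $\mu_1<2p-1$ by construction, so $p\leq \mu_1\leq 2p-2$, and $\mu$ is itself a nonempty restricted $(2,1)$-good partition.

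At this point the form of $\mu$ is fixed by the previous lemmas of this section: Lemma~2.4 handles $\mu_1=p$, Lemma~2.7 handles $p<\mu_1<2p-2$, and Lemma~2.8 handles $\mu_1=2p-2$. To recover $\lambda$ from $\mu$ one prepends $j-1$ copies of $2p-1$, and the componentwise identity
\[
(2p-1)^{j-1}=(p-1)^{j-1}+(p-1)^{j-1}+\omega_{j-1}
\]
allows this prefix to be distributed across the three summands appearing in the description of $\mu$. In each of the four sub-cases one absorbs the two $(p-1)^{j-1}$ blocks into the $(p-1)^k$- and $(p-1)^m$-blocks of $\mu$'s description (creating the latter from nothing if $\mu$ has only one $(p-1)$-block), and adds $\omega_{j-1}$ to the existing $\omega_r$, yielding a presentation $\lambda=(p-1)^K A+(p-1)^M B+\omega_R$ of the required shape.

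The main obstacle will be the bookkeeping in this final case split: in each of the four forms for $\mu$ one must check that the recombined exponents satisfy $K,M\geq 1$ and $p-1>A,B\geq 0$. The inequality $j\geq 2$ guarantees $K,M\geq 1$ directly, and the digit bounds on the new $A,B$ are inherited from the corresponding parameter bounds for $\mu$. The only slightly delicate point is the setting of Lemma~2.4, in which $\mu$ contains only one $(p-1)^k$-block, so one of the two $(p-1)^{j-1}$ prefix blocks must serve as the second $(p-1)^M B$ summand on its own (with $B=0$); it is here that the hypothesis $j\geq 2$ (i.e., $\lambda_1=2p-1$ being strictly larger than $\mu_1$) is essential to produce the second block.
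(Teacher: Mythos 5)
Your proof is correct and follows essentially the same route as the paper: the paper runs a minimal-length induction, removing the first row (which forces $\lambda_2\geq p$ by restrictedness), invoking Lemmas 2.4, 2.7, 2.8 (or minimality when $\lambda_2=2p-1$) for the tail, and then prepending $2p-1=(p-1)+(p-1)+1$ to the three summands — exactly your decomposition, with your iterated use of Corollary 1.12 playing the role of the induction. The bookkeeping you flag works out as you describe, so there is nothing to add.
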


\begin{proof}  Since $\lambda$ is restricted its length $n$ is at least $2$. Suppose the result is false and that $\lambda$ is a counterexample of minimal length. Consider $\tilde\lambda=(\lambda_2,\ldots,\lambda_n)$.  We have $\lambda_2\geq p$ (since $\lambda_1=2p-1$ and $\lambda$ is restricted). Hence, by Lemmas 2.4, 2.7, 2.8 and the minimality of length,  we have $\hat\lambda=(p-1)^ka+(p-1)^mb+\omega_r$ for some $k,m,r\geq 0$ and $p-1>a\geq 0$, $p-1>b\geq 0$ and hence $\lambda=(p-1)^{k+1}a+(p-1)^{m+1}b+\omega_{r+1}$.

\end{proof}

\q Most of the preceding results can be summarised in  the following simple statement. We assume now that $p$ is any prime.

\begin{theorem} A partition $\lambda$ is $(2,1)$-special if and only if $\lambda_1\leq 2p-1$ and $\lambda$ has the form 
$\mu+\omega_s$ for some $2$-special partition $\mu$ and some $s\geq0$.

\end{theorem}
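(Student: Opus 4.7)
The plan is to prove both directions of the stated equivalence, using the lemmas of this section as building blocks.

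For the reverse direction, suppose $\lambda=\mu+\omega_s$ with $\mu$ a $2$-special partition. Then $\mu$ is $(2,0)$-special by definition, and $\omega_s$ is $(0,1)$-special since $\tbw^s(E_n)\cong L(\omega_s)$. Lemma~1.9 then gives that $\lambda$ is $(2,1)$-special. The inequality $\lambda_1\leq 2p-1$ is automatic: Remark~1.3 forces $\mu_1\leq 2(p-1)$, while $(\omega_s)_1\leq 1$.

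For the forward direction, assume $\lambda$ is $(2,1)$-special; the bound $\lambda_1\leq 2p-1$ is immediate from Remark~1.3. Under the standing hypothesis $p>2$, I would split according to whether $\lambda$ is restricted and, if so, the value of $\lambda_1$. If $\lambda$ is non-restricted, Proposition~2.6 (equivalently Lemma~2.5) yields $\lambda=(p-2)^k ab+\omega_r+p\omega_s$ with $s\geq1$; then $\mu=(p-2)^k ab+p\omega_s$ is $2$-special by Proposition~2.2, and $\lambda=\mu+\omega_r$. If $\lambda$ is restricted, then $(2,1)$-good and $(2,1)$-special coincide, and applying Lemmas~2.3, 2.4, 2.7, 2.8, 2.9 according as $\lambda_1\leq p-1$, $\lambda_1=p$, $p<\lambda_1<2p-2$, $\lambda_1=2p-2$, or $\lambda_1=2p-1$ produces in every subcase a normal form that splits visibly as $\mu+\omega_s$, with $\mu$ either of the type $(p-2)^k ab$ ($2$-special by Lemma~2.1(i)) or a sum of two $1$-special partitions ($2$-special by Lemma~2.1(ii)).

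Finally, the case $p=2$ must be addressed separately, since the preceding lemmas were proved assuming $p>2$. In characteristic two the restricted $1$-special partitions reduce to $0$ and $\omega_1=(1)$, so the inventory of $2$-special partitions is short and the equivalence can be verified directly, by a simplified induction in the spirit of Lemmas~2.3--2.9. The main obstacle has in fact already been surmounted by the long case-analysis in those lemmas; what remains here is the routine task of matching each enumerated normal form to an expression $\mu+\omega_s$ with $\mu$ being $2$-special, using the descriptions of $2$-special partitions from Lemma~2.1 and Proposition~2.2.
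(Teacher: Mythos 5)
Your proposal takes essentially the same route as the paper: for $p>2$ the theorem is exactly the assembly of Lemmas 2.3--2.9 (via the coincidence of $(2,1)$-good and $(2,1)$-special for restricted partitions) together with Remark 1.3 for the bound $\lambda_1\leq 2p-1$, and the converse is Lemma 1.9 applied to the $(2,0)$-special partition $\mu$ and the $(0,1)$-special partition $\omega_s$. For $p=2$ the paper avoids any induction by noting that $\lambda_1\leq 3$ forces $\lambda=\omega_a+\omega_b+\omega_c$ and that $\omega_a+\omega_b$ is $2$-special; note also that your parenthetical claim that the restricted $1$-special partitions in characteristic $2$ reduce to $0$ and $\omega_1$ is a slip (they are all the $\omega_k$, $k\geq 0$), though it does not affect the viability of a direct verification in this case.
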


\begin{proof} For $p>2$ this follows from Lemmas 2.3 to 2.9 and Remark 1.3. 

\q Suppose now $p=2$.  Suppose that $\lambda$ is a $(2,1)$-special partition. Then   $L(\lambda)$ is a composition factor of $L(\mu)\otimes \tbw^s E$ for some  $2$-special partition $\mu$ and $s\geq 1$. But then $\mu_1\leq 2p-2=2$, by Remark 1.3.  Now $L(\mu)\otimes \tbw^s E$  has unique highest weight $\mu+\omega_s$ and so $\lambda\leq \mu+\omega_s$, which gives $\lambda_1\leq 2p-2+1=3$.  But now the fact that $\lambda_1\leq 3$ gives that $\lambda$ may be written $\omega_a+\omega_b+\omega_c$ for  some $a,b,c\geq 0$.  Moreover, $\omega_a+\omega_b$ is $2$-special, e.g., by \cite{DG4}, Theorem 6.5 so that $\lambda$ has the required form.

\q Conversely, suppose that $\lambda$ is a partition of the form $\mu+\omega_s$, for some $2$-special partition $\mu$ and $s\geq 0$.   Then $\lambda$ is $(2,1)$-special by Lemma 1.9.

\end{proof}

\subsection*{Symmetric Groups}

\q We can use the analysis above to give a precise description of the simple modules for symmetric groups that appear as composition factors of  Specht modules $\Sp(\mu)$, with $\mu$ a $(2,1)$-bounded partition. For the relation between the polynomial representations of $GL_n(K)$ and the representations of the symmetric groups via the Schur functor  we refer the reader to chapter 6 of \cite{EGS}. Here we consider partitions of degree $r$ with $r\leq n$.

\q We set $D_\lambda=f L(\lambda)$, where $f$ is the Schur functor, for $\lambda$ restricted.  For $\lambda$ a regular partition we write $D^\lambda$ for the head of $\Sp(\lambda)$.  The relationship between the two labelings of the irreducible modules for symmetric groups is given by $K_{\rm sgn} \otimes D^\lambda=D_{\lambda'}$  for $\lambda$ regular and $K_{\rm sgn}$ the sign module, by \cite{EGS},  (6.4l). Moreover for a partition $\mu$ we have that $f \nabla(\mu)=\Sp(\mu)$, see \cite{EGS} section 6.3, and by applying the Schur functor to  a composition series of  $\nabla(\mu)$ one obtains  $[\nabla(\mu):L(\lambda)]=[f\nabla(\mu):fL(\lambda)]=[\Sp(\mu):D_\lambda]$, for a restricted partition $\lambda$.

\begin{corollary} Let $\lambda$ be a restricted partition. Then $D_\lambda$ occurs as composition factor of a Specht module  $\Sp(\mu)$, for some $(2,1)$-bounded partition $\mu$, if and only if  $\lambda$ can be written in the form $(p-2)^kab+\omega_s$ for some $k\geq 0$, $p-2>a\geq b\geq 0$, $s\geq 0$  or $(p-1)^ka+(p-1)^mb+\omega_s$, with $p-1>a\geq 0$, $p-1>b\geq 0$, $s\geq 0$. 

\end{corollary}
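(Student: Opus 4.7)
The plan is to reduce the Specht-module statement to one about polynomial $\GL_n(K)$-modules via the Schur functor, and then appeal to the classification of $(2,1)$-special partitions obtained in this section. Since $\lambda$ is restricted, the discussion preceding the statement gives $[\Sp(\mu):D_\lambda]=[\nabla(\mu):L(\lambda)]$ for every partition $\mu$. Hence $D_\lambda$ occurs as a composition factor of some $\Sp(\mu)$ with $\mu$ a $(2,1)$-bounded partition if and only if $[\nabla(\mu):L(\lambda)]\neq 0$ for some $(2,1)$-bounded $\mu$, which by Lemma 1.5 is equivalent to $\lambda$ being $(2,1)$-good. Since $\lambda$ is restricted, Proposition 1.4 further translates this into $\lambda$ being $(2,1)$-special.

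The task therefore reduces to identifying the restricted $(2,1)$-special partitions. By Theorem 2.10 any $(2,1)$-special $\lambda$ has the form $\nu+\omega_s$ for some $2$-special $\nu$ and some $s\geq 0$, and the key point will be to argue that when $\lambda$ is restricted one may take $\nu$ to be restricted as well. By Lemma 2.2, every non-restricted $2$-special partition has the shape $(p-2)^kab+p\omega_t$ with $t\geq 1$, and a short case analysis on the position of the $p\omega_t$ summand relative to $k$, $a$ and $b$ (using $p-2>a\geq b\geq 0$) shows that such a partition always exhibits either a difference of at least $p$ between two consecutive parts, or a final nonzero part of size at least $p$. Since adding $\omega_s$ can raise any such gap by at most $1$, a non-restricted choice of $\nu$ would force $\lambda$ itself to be non-restricted, contrary to hypothesis.

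Once the reduction to restricted $\nu$ is in hand, Lemma 2.1 supplies the two possible shapes for $\nu$, namely $(p-2)^kab$ with $p-2>a\geq b\geq 0$, or $(p-1)^ka+(p-1)^mb$ arising as a sum of two $1$-special partitions. Adding $\omega_s$ in each case produces exactly the two families listed in the statement. The converse direction is immediate, since any partition of one of the stated forms is of the shape (restricted $2$-special)$\,+\,\omega_s$ and is therefore $(2,1)$-special by Lemma 1.9 combined with Lemma 2.1, hence $(2,1)$-good. I expect the restrictedness reduction in the second paragraph to be the only nontrivial step; everything else is bookkeeping on results already established in this section.
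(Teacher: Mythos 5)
Your proposal is correct, and it fills in exactly the argument the paper leaves implicit (the corollary is stated without proof): the dictionary $[\Sp(\mu):D_\lambda]=[\nabla(\mu):L(\lambda)]$ from the preceding paragraph, Lemma 1.5, and Proposition 1.4 reduce everything to classifying the restricted $(2,1)$-special partitions. The one place where you do genuinely additional work is in passing through Theorem 2.10 and showing that the $2$-special summand $\nu$ in $\lambda=\nu+\omega_s$ may be taken restricted; this step is sound (for a non-restricted $\nu=(p-2)^kab+p\omega_t$ one has $\nu+\omega_s=\bigl((p-2)^kab+\omega_s\bigr)+p\omega_t$, which has a consecutive gap of size at least $p$ at position $t$, so $\lambda$ could not be restricted), though your phrase ``can raise any such gap by at most $1$'' would be better stated as ``cannot decrease any consecutive difference,'' and you should note that when $\omega_s$ extends past the length of $\nu$ a final part $\geq p$ turns into a gap $\geq p$ rather than disappearing. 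Alternatively, the two families can be read off directly from Lemmas 2.3, 2.4 and 2.7--2.9, which already classify the restricted $(2,1)$-good partitions case by case according to the value of $\lambda_1$; either route gives the stated answer.
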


\begin{corollary} Let $\lambda$ be a regular  partition. Then $D^\lambda$ occurs as composition factor of a Specht module  $\Sp(\mu)$, for some $(1,2)$-bounded partition $\mu$, if and only if $\lambda'$ can be written in the form $(p-2)^kab+\omega_s$ for some $k\geq 0$, $p-2>a\geq b\geq 0$, $s\geq 0$  or $(p-1)^ka+(p-1)^mb+\omega_s$, with $p-1>a\geq 0$, $p-1>b\geq 0$, $s\geq 0$.

\end{corollary}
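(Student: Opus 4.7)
The plan is to deduce this directly from Corollary 2.11 via a sign-twist/transposition duality between the two parameterisations $D_\lambda$ (for $\lambda$ restricted) and $D^\lambda$ (for $\lambda$ regular) of the irreducible $KS_r$-modules.

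The key identity to establish is that, for every partition $\mu$ and every regular partition $\lambda$,
\[
[\Sp(\mu):D^\lambda] = [\Sp(\mu'):D_{\lambda'}].
\]
I would prove this from three ingredients. First, the fact recalled in the excerpt that $K_{\rm sgn}\otimes D^\lambda=D_{\lambda'}$, combined with exactness of $-\otimes K_{\rm sgn}$, gives $[\Sp(\mu):D^\lambda]=[\Sp(\mu)\otimes K_{\rm sgn}:D_{\lambda'}]$. Second, the classical sign-twist isomorphism $\Sp(\mu)\otimes K_{\rm sgn}\cong \Sp(\mu')^*$ (see \cite{EGS}). Third, the fact that all irreducible $KS_r$-modules are self-dual (since $S_r$ is ambivalent), so passing to the $K$-dual preserves composition multiplicities and $[\Sp(\mu')^*:D_{\lambda'}]=[\Sp(\mu'):D_{\lambda'}]$. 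Chaining these three yields the displayed identity.

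With the displayed identity in hand, the remainder is routine bookkeeping. By the remark following Definition 1.1, transposition $\mu\mapsto\mu'$ is a bijection between $(1,2)$-bounded and $(2,1)$-bounded partitions. Hence $D^\lambda$ occurs as a composition factor of $\Sp(\mu)$ for some $(1,2)$-bounded $\mu$ if and only if $D_{\lambda'}$ occurs as a composition factor of $\Sp(\nu)$ for some $(2,1)$-bounded $\nu$. Applying Corollary 2.11 to the restricted partition $\lambda'$ then produces the two possible forms listed in the statement.

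The only non-mechanical step is the precise form of the sign-twist isomorphism $\Sp(\mu)\otimes K_{\rm sgn}\cong \Sp(\mu')^*$: one must verify that the conventions of \cite{EGS} for $\Sp(\mu)=f\nabla(\mu)$ line up with the classical James version rather than some sign variant. This is the main obstacle, but it is purely a matter of checking conventions; no further substantive argument is needed.
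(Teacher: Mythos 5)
Your argument is correct and is precisely the (implicit) derivation the paper intends: Corollary 2.12 is stated without proof as the sign-twisted form of Corollary 2.11, obtained from $K_{\rm sgn}\otimes D^\lambda=D_{\lambda'}$, the isomorphism $\Sp(\mu)\otimes K_{\rm sgn}\cong \Sp(\mu')^{*}$, self-duality of the simple $KS_r$-modules, and the fact (noted after Definition 1.1) that transposition exchanges $(1,2)$-bounded and $(2,1)$-bounded partitions. Your concern about sign conventions is immaterial, since replacing $\Sp(\mu')^{*}$ by $\Sp(\mu')$ does not change composition multiplicities in any case.
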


\bs\bs\bs



\section{The $2$-good partitions}

\q In this section we describe the $2$-good partitions. We work over an algebraically closed field $K$ of arbitrary positive characteristic $p$.

\begin{definition}

(i) A partition will be called a {\it beginning term} if it has the form $(p-2)^kab$, for some $k\geq 0$, $p-2 \geq  a\geq b\geq 0$.

(ii) A partition will be called a {\it middle  term} if it is not a beginning term but has the form $\lambda+ \omega_r$ for some beginning term $\lambda$ and $r\geq 1$.

(iii) A partition will be called an  {\it end term} if  it is restricted, not $2$-special and can be written in the form $\lambda+\omega_r$ for some  $2$-special partition $\lambda$  and $r\geq 1$. 
\end{definition}

\begin{definition}
We call a partition $\lambda$   {\it primitive} if either: 

(i) $\lambda$ is a  restricted $2$-special partition; or

(ii) $\lambda$  can be written of the form,
 $\lambda=\lambda^0+p\lambda^1+\dots+p^m\lambda^m$ for some $m>0$ and partitions $\lambda^0,\ldots,\lambda^m$ such that $\lambda^0$ is a beginning term, $\lambda^1,\ldots,\lambda^{m-1}$ are middle terms and $\lambda^m$ is an end term.

\q We will say that the primitive partition is  of index $0$ in   the first case  and of index $m$ in  the second.

\end{definition}

\begin{definition}

 We define the set of standard partitions to be the smallest set of partitions such that:

(i) all primitive partitions are standard; and 

(ii) a partition  $\lambda$ is standard if it has the form $\mu+p^{m+1}\tau$ for some primitive partition $\mu$ of index $m$ and a standard partition $\tau$.

\end{definition}

Thus a standard partition $\lambda $  has the form 
$$\lambda=\lambda(0)+p^{m_0+1}\lambda(1)+\cdots+p^{m_0+\cdots+m_{k-1}+k}\lambda(k)\eqno(*)$$
for some $k\geq 0$, where each $\lambda(i)$ is a primitive partition and where $m_i$ is the index of $\lambda(i)$ for $0\leq i<k$.

\begin{remark} Note that,  by  \cite{DG4},  Theorem 6.5,  any  $2$-special partition is standard.
\end{remark}

 We check that standard partitions have the following unique readability property.

\begin{lemma}  Suppose for a partition $\lambda\neq 0$ we have 
\begin{align*}\lambda&=\mu(0)+p^{m_0+1}\mu(1)+\cdots+p^{m_0+\cdots+m_{k-1}+k}\mu(k)\cr
&=\tau(0)+p^{n_0+1}\tau(1)+\cdots+p^{n_0+\cdots+n_{l-1}+l}\tau(l)\
\end{align*}

where all $\mu(i),\tau(j)$ are primitive, $\mu(k)\neq 0\neq \tau(l)$ and  $\mu(i)$ has index $m_i$ and $\tau(j)$ has index $n_j$, for $0\leq i<k$, $0\leq j<l$.

\q  Then we have   $k=l$ and $\mu(i)=\tau(i)$ for $0\leq i\leq k$.
\end{lemma}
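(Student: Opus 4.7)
My plan is to compare the two decompositions by means of the $p$-adic expansion of $\lambda$ and then to induct on the degree of $\lambda$. The key observation is that every digit which appears in either decomposition is restricted: beginning terms have all entries $\le p-2$, middle and end terms are restricted by definition, and a restricted $2$-special partition is restricted. Consequently, concatenating the digits of the successive primitive pieces in either decomposition yields the $p$-adic expansion of $\lambda$ into restricted digits, which is unique. So the two sequences of digits coincide position by position.

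It therefore suffices to prove $\mu(0)=\tau(0)$, for then induction on the degree, applied to $(\lambda-\mu(0))/p^{m_0+1}$, gives the rest; when that remainder vanishes the equality $\mu(0)=\tau(0)$ forces $k=l=0$ directly. Assume without loss of generality that $m_0\le n_0$. The matching of digits gives $\mu(0)^j=\tau(0)^j$ for $0\le j\le m_0$, so if $m_0=n_0$ we are done. Suppose for contradiction that $m_0<n_0$. Then $\tau(0)$ has positive index $n_0$, so $\tau(0)^0$ is a beginning term, $\tau(0)^1,\dots,\tau(0)^{n_0-1}$ are middle terms, and $\tau(0)^{n_0}$ is an end term.

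Beyond position $m_0$ the $\mu$-decomposition must open new primitive pieces $\mu(1),\mu(2),\dots$ at positions $m_0+1,m_0+2,\dots$. The first digit of any primitive piece is always a restricted $2$-special partition (either a beginning term, when the index is positive, or the whole singleton primitive, when the index is $0$). However for $1\le j\le n_0-m_0-1$ the relevant digit equals the middle term $\tau(0)^{m_0+j}$, and by the very definition of middle term this is never a beginning term. Hence $\mu(j)$ cannot have positive index, so $\mu(j)$ is a singleton primitive of index $0$ and occupies the single position $m_0+j$. Iterating this pushes $\mu(n_0-m_0)$ to start at position $n_0$, where its first digit equals $\tau(0)^{n_0}$, an end term; but end terms are, by definition, not $2$-special, contradicting the requirement that the first digit of a primitive piece be restricted $2$-special. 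Hence $m_0=n_0$ and $\mu(0)=\tau(0)$, completing the induction.

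The main obstacle is that middle terms and end terms are not disjoint: a middle term which fails to be $2$-special is automatically also an end term, so at first sight a single digit could equally well close a primitive piece (as an end term) or be an interior step of it (as a middle term), apparently making the parsing ambiguous. The rigidity that resolves this is encoded in the \emph{start} of each new primitive piece rather than in its end: the first digit must be restricted $2$-special, a condition an end term can never satisfy. Propagating this through the forced chain of singleton pieces is what produces the contradiction above and pins the primitive-piece boundaries uniquely.
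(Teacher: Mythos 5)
Your proof is correct and rests on the same ingredients as the paper's own argument: the uniqueness of the base-$p$ expansion of $\lambda$ into restricted digits, together with the facts that a primitive piece must open with a restricted $2$-special digit (a beginning term when the index is positive), that middle terms are by definition never beginning terms, and that end terms are never $2$-special. The paper organizes the induction slightly differently --- it cases on which digits of $\lambda$ are beginning terms and splits the decomposition at those anchors, peeling off the top digit in the remaining case --- whereas you compare the indices of the leading pieces head-on and propagate a forced chain of singleton pieces to a contradiction at the end term; the mathematical content is the same and your bookkeeping is, if anything, tidier.
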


\begin{proof}  We consider the base $p$ expansion 
$$\lambda=\sum_{h=0}^N p^h\lambda^h$$
 (where each  $\lambda^h$ is  restricted  and $\lambda^N\neq 0$). 
 
 \q If $\lambda^0$ is not a beginning term then we must have $\mu(0)=\lambda^0=\tau(0)$ and 
 
 \begin{align*}
 \mu(1&)+p^{m_1+1}\mu(2)+\cdots+p^{m_1+\cdots+m_{k-1}+(k-1)}\mu(k)\cr
 &=\tau(1)+p^{n_1+1}\tau(2)+\cdots+p^{n_1+\cdots+n_{l-1}+(l-1)}\tau(l)
 \end{align*}
 and we obtain inductively that $k-1=l-1$, so that $k=l$, and $\mu(i)=\tau(i)$, for all $i$.
 
 \q Thus we may assume that $\lambda^0$ is a beginning term. Suppose that $\lambda^h$ is also a beginning term for some  $0<h\leq N$.  Then for some $0\leq s\leq k$, $0\leq t\leq l$, we have

 \begin{align*}\lambda^0+p\lambda^1+\cdots+p^{h-1}\lambda^{h-1}&=\mu(0)+p^{m_0+1}\mu(1)+\cdots+p^{m_0+\cdots+m_{s-1}+s}\mu(s)\cr
&=\tau(0)+p^{n_0+1}\tau(1)+\cdots+p^{n_0+\cdots+n_{t-1}+t}\tau(t).
\end{align*}

Thus we have inductively $s=t$, $\mu(i)=\tau(i)$ for $0\leq i \leq s$.  ŒIn particular we have $\mu(0)=\tau(0)$ and $m_0=n_0$. Subtracting $\mu(0)=\tau(0)$ from $\lambda$ and dividing by $p^{m_0+1}$ we thus obtain
\begin{align*}\mu(1)&+p^{m_1+1}\mu(2)+\cdots+p^{m_1+\cdots+m_{k-1}+k-1}\mu(k)\cr
&=\tau(1)+p^{n_1+1}\tau(2)+\cdots+p^{n_1+\cdots+m_{l-1}+l-1}\tau(l)
\end{align*}
 and by induction obtain $k=l$, $\mu(i)=\tau(i)$, $1\leq i\leq k$.

 \q So we may assume that no $\lambda^h$, with $h>0$, is a beginning term. This implies that all $\mu(i),\tau(j)$, with $i>0$, $j>0$, are restricted $2$-special partitions.
 
\q If $k=0$ then $\lambda=\mu(0)$.  If $m_0=0$ then $\lambda=\lambda^0$ and the result is clear. If $m_0>0$ then $\lambda^N$ is an end term. Hence $\lambda^N$ is an end term of some $\tau(j)$ and  since there is only one beginning term,  namely $\lambda^0$, we must have $j=0$ and $\lambda=\tau(0)$, $l=0$.  Thus we may assume $k>0$ and, for the same reason, that $l>0$.

 \q Thus we have that $\mu(i)$ is $2$-special for $0<i\leq k$ and $\tau(j)$ is $2$-special for $0<j\leq l$.  We thus have  $\mu(k)=\lambda^N=\tau(l)$ and 
 \begin{align*}\mu(0)&+p\mu(1)^{m_0+1}+\cdots+p^{m_0+\cdots+m_{k-2}+(k-1)}\mu(k-1)\cr
 &=\tau(0)+p^{n_0+1}\tau(1)+\cdots+p^{n_0+\cdots+n_{l-2}+(l-1)}\tau(l-1).
 \end{align*}
Again we obtain inductively that $k-1=l-1$, so $k=l$, and $\mu(i)=\tau(i)$, for all $0\leq i\leq k$.

\end{proof}

\q An irreducible module will be called primitive of index $m$ (resp. standard) if  it has the form $L(\lambda)$, where $\lambda$ is primitive of index $m$ (resp. standard). 
We now come to the main result of the paper, which gives a precise description of the irreducible modules that occur as a composition factor of a tensor product of symmetric powers of the natural module.

\begin{theorem} A partition is $2$-good if and only if it is standard.

\end{theorem}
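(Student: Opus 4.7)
The plan is to prove both directions by induction on $|\lambda|$, with Proposition~1.4 and Theorem~2.10 as the principal tools, and the description of $2$-special partitions (Lemma~2.1, Proposition~2.2, Remark~3.4) providing the bridge between the two inductions.

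For the ``if'' direction (standard $\Rightarrow$ $2$-good), I will first show that every primitive partition is $2$-good, then handle the general standard case by inducting on the number of primitive pieces. A primitive of index $0$ is restricted $2$-special, hence a composition factor of $\barS(E)^{\otimes 2}$, which is a quotient of $S(E)^{\otimes 2}$. For a primitive $\lambda = \lambda^0 + p\lambda^1 + \cdots + p^m\lambda^m$ of index $m \geq 1$, I induct secondarily on $m$: writing $\lambda^1 = \nu^1 + \omega_{r_1}$ (so $\nu^1$ is a beginning term when $m \geq 2$, and restricted $2$-special when $m=1$, with $r_1 \geq 1$), I set $\mu := \lambda^0 + p\omega_{r_1}$ and $\tau := \nu^1 + p\lambda^2 + \cdots + p^{m-1}\lambda^m$. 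By Proposition~2.2, $\mu$ is $2$-special, and $\tau$ is primitive of index $m-1$, hence $2$-good by the inner induction. Applying Steinberg, $L(\mu) \otimes L(\tau)^F \cong L(\lambda^0) \otimes \bigl(L(\omega_{r_1}) \otimes L(\tau)\bigr)^F$, and the constituent coming from the highest-weight composition factor $L(\omega_{r_1} + \tau)$ of $L(\omega_{r_1}) \otimes L(\tau)$ is exactly $L(\lambda)$. Since $L(\mu)$ is a composition factor of $\barS(E)^{\otimes 2}$ and $L(\tau)$ is a composition factor of $S(E)^{\otimes 2}$, this places $L(\lambda)$ inside $\barS(E)^{\otimes 2} \otimes (S(E)^{\otimes 2})^F = S(E)^{\otimes 2}$. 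The outer induction, which peels off the leading primitive piece of the standard decomposition, is handled by essentially the same construction after absorbing the tail $p^{m_0+1}\tau'$ into $\tau$.

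For the ``only if'' direction, suppose $\lambda$ is $2$-good. By Proposition~1.4, $L(\lambda)$ is a composition factor of $L(\mu) \otimes L(\tau)^F$ for some $2$-special $\mu$ and some $2$-good $\tau$. If $\tau = 0$ then $\lambda = \mu$ is $2$-special and hence standard by Remark~3.4; otherwise $\tau$ is standard by the inductive hypothesis on degree. By Lemma~2.1 and Proposition~2.2, either $\mu$ is restricted (so primitive of index $0$, making $\lambda = \mu + p\tau$ immediately standard by the recursive definition) or $\mu = \mu^0 + p\omega_s$ for some beginning term $\mu^0$ and some $s \geq 1$. In the latter case, Steinberg yields $\lambda = \mu^0 + p\gamma$, where $L(\gamma)$ is a composition factor of $L(\omega_s) \otimes L(\tau)$, and the task is to verify that $\mu^0 + p\gamma$ is standard.

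This verification is the main obstacle of the proof. The plan is to unpack the standard decomposition $\tau = \tau(0) + p^{n_0+1}\tau(1) + \cdots$ and examine the interaction of $\omega_s$ with the leading primitive piece $\tau(0)$ at the lowest Frobenius level, where the resulting composition factors have restricted parts that are $(2,1)$-special. Theorem~2.10 then forces the restricted part $\gamma^0$ of $\gamma$ to be of the form $\nu + \omega_t$ for some $2$-special $\nu$ and some $t \geq 0$, so that together with $\mu^0$ one obtains the opening of a new primitive piece (of index $\geq 1$) in the would-be standard decomposition of $\lambda$. Propagating the $\omega_t$-tail upwards through successive Frobenius levels produces the middle-term chain required by the primitive definition, terminating at an end term at the top of the piece, after which the remaining primitive pieces of $\gamma$ (inherited from $\tau$) attach beyond it. Uniqueness of the standard decomposition from Lemma~3.5 then guarantees that this assembly is well-defined and yields a genuine standard partition.
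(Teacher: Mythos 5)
Your ``if'' direction is essentially the paper's argument: the re-grouping trick of absorbing each $\omega_{r_i}$ into the previous layer to form the $2$-special partitions $\lambda^0+p\omega_{r_1},\ \nu^1+p\omega_{r_2},\ldots$ is exactly what the paper does (it writes the whole twisted tensor product at once rather than peeling one level at a time, but this is cosmetic), and the conclusion follows from the fact that composition factors of $L(\mu)\otimes L(\tau)^F$ with $\mu$ $2$-special and $\tau$ $2$-good are $2$-good. That half is fine.

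The ``only if'' direction has a genuine gap, and it sits precisely where you say the ``main obstacle'' is. Your induction is on the degree of a $2$-good partition, with inductive hypothesis ``every $2$-good partition of smaller degree is standard.'' But after you split $\mu=\mu^0+p\omega_s$ and write $\lambda=\mu^0+p\gamma$ with $L(\gamma)$ a composition factor of $L(\omega_s)\otimes L(\tau)$, the partition $\gamma$ is not covered by that hypothesis: it is only $(2,1)$-good, not $2$-good, so you can neither conclude it is standard nor apply Theorem~2.10 to it directly (Theorem~2.10 characterises $(2,1)$-\emph{special} partitions, i.e.\ composition factors of $\barS(E)^{\otimes 2}\otimes\tbw(E)$, whereas $\tau$ here is merely $2$-good). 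Controlling only the restricted part $\gamma^0$ via Theorem~2.10, as you propose, says nothing about the higher base-$p$ layers of $\gamma$, and standardness of $\mu^0+p\gamma$ depends on how those layers mesh with $\mu^0$ (the beginning/middle/end chain condition). The ``propagating the $\omega_t$-tail upwards'' step is exactly the content that needs proof, and as framed your induction does not close on it.

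The paper resolves this by changing the object of the induction: it inducts over sequences $\mu=(\mu(0),\mu(1),\ldots)$ of $2$-\emph{special} partitions, ordered by the degree of $f(\mu)=\sum p^i\mu(i)$, and proves that every composition factor of $V(\mu)=L(\mu(0))\otimes L(\mu(1))^F\otimes\cdots$ is standard. With this setup the leftover exterior power at each stage interacts with the next \emph{special} layer $\mu(1)$, so Theorem~2.10 applies on the nose to $L(\omega_r)\otimes L(\mu(1))$, giving factors $L(\sigma+\omega_s)$ with $\sigma$ $2$-special; and when $\sigma$ is non-restricted, the surplus $p\omega_t$ is re-inserted into the sequence as a new $2$-special entry $(p\omega_t,\mu(2),\ldots)$ of strictly smaller $f$-degree, so the induction genuinely recurses. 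The subsequent three-case analysis (on whether $\sigma$ and $\sigma+\omega_s$ are restricted) is where the middle-term/end-term chain is actually assembled; your proposal would need to reproduce both the modified induction and that case analysis to be complete.
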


\begin{proof}  Here good will mean $2$-good and special will mean $2$-special.

\q We first show that any standard partition  is good. Suppose $\lambda$ is a standard partition and write $\lambda=\alpha+p^{m+1}\mu$, with $\alpha$ primitive of index $m$ and $\mu$ standard. We may assume inductively that $\mu$ is good. If $m=0$ we get immediately that $\lambda=\alpha+p\mu$ is  good, from \cite{DG4}, Corollary 2.9(ii).  Suppose now  $m>0$ and write $\alpha=\alpha^0+p\alpha^1+\ldots+p^m\alpha^m$ with $\alpha^0$ a beginning term, $\alpha^1,\ldots,\alpha^{m-1}$ middle terms and $\alpha^m$ an end term.  We write $\alpha^i=\beta^i+\omega_{r_i}$,  with $\beta^i$ a beginning term, $r_i\geq 1$, for $1\leq i\leq m-1$. We write $\alpha^m=\beta^m+\omega_{r_m}$, with $\beta^m$ restricted and  special and $r_m\geq 1$.   Now $\alpha^0+p\omega_{r_1},  \beta^1+p\omega_{r_2}, \ldots, \beta^{m-1}+p\omega_{r_m}$  are special, by \cite{DG4}, Theorem 6.5, and 
$\beta^m$ is   special so that  $\mu$ is good. Thus by  \cite{DG4}, Corollary 2.9(ii),  every composition factor of
\begin{align*}L(\alpha^0+p\omega_{r_1})&\otimes L(\beta^1+p\omega_{r_2})^F\otimes \cdots\\
 &\otimes L(\beta^{m-1}+p\omega_{r_m})^{F^{m-1}} 
\otimes L(\beta^m)^{F^m}\otimes L(\mu)^{F^{m+1}}
\end{align*}
is  good.  But this module has highest weight $\lambda$ and so $\lambda$ is  good.

\q We now show the converse.   An irreducible good  module is a composition factor of 
$$L(\mu(0))\otimes L(\mu(1))^F\otimes \cdots \otimes L(\mu(h))^{F^h}$$
for some $h\geq 0$ and   special partitions $\mu(0),\ldots,\mu(h)$, by \cite{DG4}, Corollary 2.9(ii).    We consider the set ${\mathcal S}$ of all sequences $\mu=(\mu(0),\mu(1),\ldots)$ of special partitions,   with $\mu(j)=0$ for $j\gg 0$.  For $\mu=(\mu(0),\mu(1),\ldots )\in {\mathcal S}$ we define  the module
$$V(\mu)=L(\mu(0))\otimes L(\mu(1))^F\otimes \cdots$$

and the partition 
$$f(\mu)=\mu(0)+p\mu(1)+\cdots.$$
The result will be proved if we establish the claim that every composition factor of $V(\mu)$, for $\mu\in {\mathcal S}$,  is standard.  Assume, for a contradiction,that the claim  is false and that $\mu\in {\mathcal S}$ is such that $f(\mu)$ has minimal degree subject to the condition that $V(\mu)$ has a non-standard composition factor.  We denote by $N$ a non-standard composition factor.

\q  Note that for $\nu=(0,0,\ldots)$ we have $V(\nu)=L(0)$, which is standard, and so $\mu\neq (0,0,\ldots)$. 

\q  Let $\lambda=\mu(0)$. Suppose that  $\lambda$ is restricted. Let $\tau= (\mu(1),\mu(2),\ldots)\in {\mathcal S}$. Then we have $V(\mu)=L(\lambda)\otimes V(\tau)^F$. Then $N$ is a composition factor of $L(\lambda)\otimes M^F$, for some composition factor  $M$ of $V(\tau)$. By the minimality of $\mu$, we have $M=L(\sigma)$, for standard partition $\sigma$ and hence, by Steinberg's tensor product theorem 
$N=L(\lambda+  p\sigma)$, which is standard.

\q Thus $\lambda$ is not restricted and, by Proposition 2.2,   we may write $\lambda=\lambda^0+p\omega_r$, with $\lambda^0$ a beginning term and $r\geq 1$. Setting $\theta=\mu(1)$ we now have 
$$V(\mu)=L(\lambda^0)\otimes Z^F\otimes V(\xi)^{F^2}$$
where $Z=L(\omega_r)\otimes L(\theta)$ and $\xi=(\mu(2),\mu(3), \ldots)\in {\mathcal S}$.

\q Hence $N$ is a composition factor of $L(\lambda^0)\otimes Z_1^F\otimes V(\xi)^{F^2}$, for some composition factor $Z_1$ of $Z$. By Theorem 2.10, $Z_1$ has the form $L(\sigma+\omega_s)$, for some special partition $\sigma$ and $s\geq 0$.

\q The argument now divides into three cases.

\bs

{\it Case (i)\,: $\sigma$ is not restricted.}

\bs

\q By \cite{DG4}, Theorem 6.5, we  have $\sigma=\sigma^0+p\omega_t$, for some beginning term $\sigma^0$ and $t\geq  1$. Thus $N$ is a composition factor of $L(\lambda^0)\otimes L(\sigma^0+\omega_s)^F\otimes U^F$, where 
$$U=L(\omega_t)^F\otimes V(\xi)^F=V(\phi)$$
with $\phi=(p\omega_t,\mu(2),\mu(3),\ldots)$.  Hence $N$ is a composition factor of $L(\lambda^0)\otimes L(\sigma^0+\omega_s)^F \otimes P^F$, for some composition factor $P$ of $V(\phi)$.  By the minimality of $\mu$ we have $P=L(\gamma)$, for some standard partition $\gamma$. We may write $\gamma=\alpha+p^{m+1}\beta$, with $\alpha$ primitive of index $m$ and $\beta$ standard. Moreover, since $V(\phi)=(L(\omega_t)\otimes V(\xi))^F$, we have that $\alpha$ is divisible by $p$. 

\q If $m=0$ then $\alpha$ is restricted and hence $0$  so that  $P=L(\beta)^F$ and so $N=L(\lambda^0)\otimes L(\sigma^0+\omega_s)^F\otimes L(\beta)^{F^2}$. If $\sigma^0+\omega_s$ is not special then it is an end term and $L(\lambda^0)\otimes L(\sigma^0+\omega_s)^F$ is primitive of index $1$ and $N=L(\lambda^0)\otimes L(\sigma^0+\omega_s)^F\otimes L(\beta)^{F^2}$ is standard. If $\sigma^0+\omega_s$ is special then both $\lambda^0$ and $\sigma^0+\omega_s$ are restricted and special, $\beta$ is standard and so $N=L(\lambda^0)\otimes L(\sigma^0+\omega_s)^F\otimes L(\beta)^{F^2}$ is standard.

\q Assume now that $m>0$ and write $\alpha=\alpha^0+p\alpha^1+\cdots+p^m\alpha^m$, with $\alpha^0$ a beginning term, $\alpha^1,\ldots,\alpha^{m-1}$ middle terms and $\alpha^m$ an end term. Since $\alpha$ is divisible by $p$, we have $\alpha^0=0$. Hence we have
$$N=L(\lambda^0)\otimes L(\sigma^0+\omega_s)^F \otimes L(\alpha^1)^{F^2}\otimes \cdots\otimes L(\alpha^m)^{F^{m+1}}\otimes L(\beta)^{F^{m+2}}.$$

\q If $\sigma^0+\omega_s$ is a beginning term then $(\sigma^0+\omega_s)+p\alpha^1+\cdots+ p^m\alpha^m$ is primitive of index $m$ and so $L(\sigma^0+\omega_s)\otimes L(\alpha^1)^F\otimes \cdots\otimes L(\alpha^m)^{F^m}\otimes L(\beta)^{F^{m+1}}$ is standard. Hence 
$$N=L(\lambda^0)\otimes (L(\sigma^0+\omega_s) \otimes L(\alpha^1)^{F}\otimes \cdots\otimes L(\alpha^m)^{F^{m}}\otimes L(\beta)^{F^{m+1}})^F$$
is standard.

\q If $\sigma^0+\omega_s$ is not a beginning term then it is a middle term so $\lambda^0$ is a beginning term, $\sigma^0+\omega_s,\alpha^1,\ldots,\alpha^{m-1}$ are middle terms and $\alpha^m$ is an end term so that 
$$\delta=\lambda^0+p(\sigma^0+\omega_s)+p^2\alpha^1+\cdots+p^{m+1}\alpha^m$$
is primitive of index $m+1$ and hence $\delta+p^{m+2}\beta$ is standard, i.e., $N$ is standard.

\bs

{\it Case (ii)\,: $\sigma$ is restricted and $\sigma+\omega_s$  is restricted.}

\bs

\q Now $N$ is a composition factor of $L(\lambda^0)\otimes L(\sigma+\omega_s)^F\otimes V(\xi)^{F^2}$ and so $N=L(\lambda^0)\otimes L(\sigma+\omega_s)^F\otimes Q^{F^2}$ for some composition factor $Q$ of $V(\xi)$. By minimality of $\mu$ we have $Q=L(\gamma)$ for some standard partition $\gamma$. 

\q If $\sigma+\omega_s$ is special then $\lambda^0$ and $\sigma+\omega_s$ are restricted special so that $\lambda^0+p(\sigma+\omega_s)+p^2\gamma$ is standard, i.e., $N$ is standard. 

\q If  $\sigma+\omega_s$  is not special then it is an end term and $\lambda^0+p(\sigma+\omega_s)$ is primitive of index $1$ and so $\lambda^0+p(\sigma+\omega_s)+p^2\gamma$ is standard, i.e., $N$ is standard.

\bs

{\it Case (iii)\,: $\sigma$ is restricted but  $\sigma+\omega_s$  is not.}

\bs

\q Thus we have $\sigma_1\geq p-1$ and so, by \cite{DG4}, Theorem 6.5, $\sigma=(p-1)^uc+(p-1)^vd$, for some $u,v\geq 0$, $p-1>c\geq 0$, $p-1>d\geq 0$. It is not difficult to check that, since $\sigma$ is restricted and $\sigma+\omega_s$ is not, we can write $\sigma=(p-1)^ka+(p-1)^s$ and $\sigma+\omega_s=(p-1)^ka+p\omega_s$, with $k\geq 0$, $p-1>a\geq 0$. Hence $N$ is a composition factor of 
$$L(\lambda^0)\otimes L(\zeta)^F \otimes (L(\omega_s)\otimes V(\xi))^{F^2}$$
where $\zeta=(p-1)^ka$.  Thus $N$ has the form $L(\lambda^0)\otimes L(\zeta)^F\otimes L^{F^2}$, where $L$ is a composition factor of $L(\omega_s)\otimes V(\xi)$.  Thus $L^F$ is a composition factor of $L(p\omega_s)\otimes V(\xi)^F$ and, by minimality of $\mu$, we must have $L^F=L(\delta)$, for some standard $\delta$. We write $\delta=\alpha+p^{m+1}\beta$, where $\alpha$ is primitive of index $m$ and $\beta$ is standard. 

\q If $m=0$ then, since $L(\delta)=L^F$, we have $\alpha=0$. Thus we have $N=L(\lambda^0)\otimes L((p-1)^ka)^F\otimes L(\beta)^{F^2}$, i.e., $N=L(\eta)$, where $\eta=\lambda^0+p\zeta+p^2\beta$. Now $\lambda^0$ and $\zeta$ are  restricted special and $\beta$ is standard so that $\eta$ is standard, i.e., $N$ is standard.

\q So we may suppose $m>0$. We write $\alpha=\alpha^0+p\alpha^1+\cdots+p^m\alpha^m$, with $\alpha^0$ a beginning term, $\alpha^1,\ldots,\alpha^{m-1}$ middle terms and $\alpha^m$ an end term. Since $L(\alpha)\otimes L(\beta)^{F^{m+1}}$ is $L^F$ we must have $\alpha^0=0$. Hence we have
$$N=L(\lambda^0)\otimes L(\zeta)^F\otimes L(\alpha^1)^{F^2}\otimes \cdots \otimes L(\alpha^m)^{F^{m+1}}\otimes L(\beta)^{F^{m+2}}.$$

\q Recall that $\zeta=(p-1)^ka$. If $k=0$ then $\zeta$ is a beginning term. Also, $\alpha^1,\ldots,\alpha^{m-1}$ are middle terms and $\alpha^m$ is an end term. Hence $\epsilon=\zeta+p\alpha^1+\cdots+p^m\alpha^m$ is primitive of index $m$ and therefore $\epsilon+p^{m+1}\beta$ is standard. Hence $\nu=\lambda^0+p(\epsilon+p^{m+1}\beta)$ is standard, i.e., $N=L(\nu)$ is standard.

\q If $k>0$ then $\zeta=(p-1)^ka$ is a middle term. Thus $\lambda^0$ is a beginning term, $\zeta,\alpha^1,\ldots,\alpha^{m-1}$ are middle terms and $\alpha^m$ is an end term so that $\gamma=\lambda^0+p\zeta+p^2\alpha^1+\cdots+p^{m+1}\alpha^m$ is primitive of index $m+1$ and $\gamma+p^{m+2}\beta$ is standard, i.e., $N$ is standard.

\bs

Expressing this in terms of modules and bearing in mind the uniqueness statement,  Lemma 3.5, we get the following result.

\bs

\begin{theorem}  A composition factor $L$ of the tensor product  $S(E)\otimes S(E)$, of two copies of the symmetric algebra of the natural ${\rm GL}_n(K)$-module $E$  has the form 
$$L=L(\lambda(0))\otimes L(\lambda(1))^{F^{m_0+1}}\otimes \cdots \otimes L(\lambda(k))^{F^{m_0+\cdots+m_{k-1}+k}}$$
for some $k\geq 0$ and primitive partitions $\lambda(0),\ldots,\lambda(k)\in \Lambda^+(n)$, where $m_i$ is the index of $\lambda(i)$, for $0\leq i<k$.  Moreover, if $L$ is non-trivial there is a unique such expression, with $\lambda(k)\neq 0$.  Further every irreducible module of the above form occurs as a  composition factor.

\end{theorem}

\end{proof}

{\bf Remark 3.8}     \, Recall from \cite{DG2}, that the divisibility index $\divind(V)$, of a non-zero polynomial ${\rm GL}_n(K)$-module $V$ is the largest integer $k$ such that $V\cong V'\otimes D^{\otimes k}$, for some polynomial module $V'$. Here $D$ denotes the one dimensional module afforded by the determinant representation. We call $V$ critical if $\divind(V)=0$. 
If $V$ is not critical then $\divind(V)=\divind(V\otimes D^*)+1$.

\q For $\lambda\in \Lambda^+(n)$ we write $\divind(\lambda)$ for the divisibility index of the injective envelope $I(\lambda)$ of $L(\lambda)$ in the category of polynomial modules. We call $\lambda$ critical if $I(\lambda)$ is critical. 

\q We now take $n=3$. (The case $n=2$ was considered in \cite{DG2}, Section 5.)  Then, by \cite{DG2}, Lemma 3.9, $\lambda$ is critical if and only if $L(\lambda)$ is a composition factor of $S(E)\otimes S(E)$, i.e., if and only if $\lambda$ is $2$-good.  Thus, from Theorem 3.7, we have an explicit description of the critical partitions $\lambda\in \Lambda^+(3)$, i.e., $\lambda$ is critical if and only if for some $k\geq 0$ we have
$$\lambda=\lambda(0)+p^{m_0+1}\lambda(2)+\cdots+p^{m_0+\cdots+m_{k-1}+k}\lambda(k)$$
for primitive $\lambda(i)\in \Lambda^+(3)$, $0\leq i\leq k$, where $\lambda(i)$ has index $m_i$, for $0\leq i<k$.

\q Thus, by \cite{DG2}, Theorem 4.1, we may describe all $\lambda\in \Lambda^+(3)$ such that the restriction of $I(\lambda)$ to the first infinitesimal subgroup $G_1$ of ${\rm GL}_3(K)$ is injective. This amounts to the following:

\bs

\q $I(\lambda)$ is injective as a $G_1$-module if and only if either:

(i) $\lambda^0_1\geq 2p-2$; or 

(ii) $p-2\leq \lambda^0_1<2p-2$ and $\barlambda$ is not critical; or 

(iii) $\lambda^0_1<p-2$,  $\barlambda$ and $\barlambda-\omega_3$ are not critical.

\bs

\q Here we are writing $\lambda=\lambda^0+p\barlambda$, with $\lambda^0$ restricted and $\barlambda\in \Lambda^+(3)$.

\section*{Acknowledgement}

The second author gratefully acknowledges the financial  support of EPSRC Grant EP/L005328/1.

\bs\bs\bs





\end{document}